\documentclass[11pt]{amsart}

\usepackage{amsmath,amssymb,amsthm}
\usepackage{bbm}
\usepackage{upgreek}
\usepackage{amsaddr}
\usepackage{mathtools}
\usepackage{geometry}
\usepackage{graphicx}
\usepackage{booktabs}
\usepackage{microtype}
\usepackage{hyperref}
\usepackage{enumerate}
\usepackage{xcolor}

\graphicspath{{figures/}}
\hypersetup{
  colorlinks=true,
  linkcolor=blue!45!black,
  citecolor=green!35!black,
  urlcolor=blue!55!black
}

\newtheorem{theorem}{Theorem}[section]
\newtheorem{proposition}[theorem]{Proposition}
\newtheorem{lemma}[theorem]{Lemma}
\newtheorem{corollary}[theorem]{Corollary}
\theoremstyle{definition}
\newtheorem{definition}[theorem]{Definition}

\theoremstyle{remark}
\newtheorem{remark}[theorem]{Remark}

\newcommand{\R}{\mathbb{R}}
\newcommand{\RP}{\mathbb{RP}}
\newcommand{\E}{\mathrm{E}}

\newcommand{\CSS}{\operatorname{CSS}}
\newcommand{\ord}{\operatorname{ord}}
\newcommand{\dd}{\,\mathrm{d}}

\newcommand{\Aeq}{\mathcal A}

\newcommand{\doi}[1]{\href{https://doi.org/#1}
{\nolinkurl{https://doi.org/#1}}}

\title[Wigner caustic and centre symmetry set of frontals]
{Singularities of the Wigner Caustic and the Centre Symmetry Set of Frontal Curves in the Euclidean Plane}

\author{Micha\l{} Zwierzy\'nski}

\email{Michal.Zwierzynski@pw.edu.pl}
\email{ORCID: 0000-0002-9627-1563}

\address{Warsaw University of Technology\\
Faculty of Mathematics and Information Science\\
ul. Koszykowa 75\\
00-662 Warsaw, Poland}

\keywords{frontal, Wigner caustic, centre symmetry set, affine equidistant, cusp, envelope}

\subjclass[2020]{Primary 53A04; Secondary 53A15, 57R45, 58K05}

\begin{document}

\begin{abstract}
Motivated by the~role of the~Wigner caustic in semiclassical phase-space analysis, we extend it, together with the~centre symmetry set, from regular planar curves to cooriented frontals.  For an~angularly regular parallel pair, the~signed speed of the~Wigner caustic is one half of the~difference of the~extended signed radii of curvature, while singular points of the~finite centre symmetry set are the~critical points of their projective ratio.  These formulas yield criteria and explicit invariants for ordinary and higher cusps, an~order-lowering relation between the~two constructions, and a~multiplicity-weighted extension of the~classical cusp-count inequality for strictly convex ovals.

We also analyse parallel pairs containing a~singular frontal which is not a~front.  A~$5/2$-cusp produces either a~$5/2$- or, at a~signed-radius resonance, a~$5/3$-cusp on the~Wigner caustic.  The~opposite resonance sends the~centre symmetry set to infinity.  We obtain the~corresponding transfer results for a~$5/3$-cusp and give a~projective completion which resolves vanishing denominators and simultaneous finite-order zeros of the~two signed radii.
\end{abstract}

\maketitle

\tableofcontents

\section{Introduction}

\noindent The~Wigner caustic of a~regular planar curve is the~locus of midpoints of chords joining points with parallel tangent lines.  Its name originates in semiclassical phase-space analysis.  In the~one-dimensional semiclassical setting, the~classical correspondence of a~pure quantum state is represented by a~smooth Lagrangian curve in phase space, and Berry showed that the~associated Wigner function exhibits enhanced values near both this curve and an~additional singular locus, called the~Wigner caustic or Wigner catastrophe \cite{Berry}.  The~local singularities and global branch structure of this locus are therefore relevant to Wigner-function dynamics and to the~breakdown of the~quantum--classical correspondence.  In our earlier work \cite{DomitrzZwier}, we developed the~global branch structure of the~Wigner caustic of closed planar curves and discussed its role in models of quantum dynamics in optical-lattice potentials.  More generally, affine equidistants record fixed affine division ratios on the~same parallel-tangent chords.  Their local singularities and global geometry have been studied from several viewpoints \cite{DomitrzRiosRuas,DomitrzZwierSingular,MillerZwier}.

The~centre symmetry set, introduced in \cite{Janeczko1996}, is the~envelope of the~family of parallel-tangent chords and may equivalently be viewed as the~locus swept out by singular points of affine equidistants as the~division ratio varies \cite{DomitrzRiosGCS,GiblinHoltom,GiblinReeveCSS,GiblinZakalyukin}.  For generic smooth strictly convex closed curves, the~number of cusps of the~Wigner caustic does not exceed the~number of cusps of the~centre symmetry set \cite[Theorem~7.2]{DomitrzRiosGCS}.

Both constructions occur in several neighbouring settings.  Wigner caustics, affine equidistants, and global centre symmetry sets have Lagrangian and higher-dimensional analogues (see \cite{DomitrzManoelRios,DomitrzRiosGCS,GiblinReeve}, and the literature therein). The~centre-chord construction also produces singular improper affine spheres \cite{CraizerDomitrzRios}.  Algebraic versions of the~symmetry defect have been studied for varieties and, more explicitly, for generic algebraic curves \cite{DiasFarnikJelonek,JaneczkoJelonekRuas}.  Their visual and artistic features are discussed in \cite{DanielewskaEtAl}.

In convex geometry, the~Wigner caustic is closely related to the~middle hedgehog and to reflection, offset, and projective-hedgehog constructions \cite{Rochera2022,SchneiderReflections,SchneiderMiddle}.  Its oriented area enters refinements and extensions of isoperimetric inequalities for ovals, rosettes, and in normed planes \cite{dosSantosCraizer2022,Zhang2021,Zwier2016,ZwierRosettes,ZwierCWMS}. Fourier and mixed-volume generalisations appear in \cite{SafarewiczZwier2026,ZwierMixedVolumes}.  Discrete Wigner caustics (known also as the \emph{area evolute}) and centre symmetry sets for polygons with parallel opposite sides were developed in \cite{CraizerTeixeiraSilva,KonicerEtAl2026}.

A~frontal curve may have singular points but still carries a~smooth tangent-line field.  This makes both chord constructions meaningful beyond regular curves.  In recent years, frontals have attracted substantial interest in differential geometry, singularity theory, contact geometry, and related areas.  Representative developments include global curvature results for singular curves \cite{HondaTanakaYamauchi}, parallels and evolutes of planar frontals \cite{TeramotoParallels}, deformation theory and singularities of frontal surfaces \cite{MunozNunoOsetDeformations,MunozNunoOsetSurfaces}, and dualities for evolving developable frontals \cite{YangLiLopez} (see also \cite{DomitrzZwierGaussBonnet,IshikawaRecognition,SajiUmeharaYamadaFronts,SajiUmeharaYamadaCTB} and the~literature therein).  For planar frontals, the~basic differential data are a~signed speed and a~tangent angle \cite{FukunagaTakahashi,UmeharaSajiYamada}. These data have recently proved effective in the~study of pedal-curve singularities \cite{LiPei,Teramoto}.  Our purpose is to show that they also give transparent formulas for the~Wigner caustic and the~centre symmetry set, while the~bilocal nature of the~two constructions creates additional phenomena.

There are two levels of degeneracy.  First, a~frontal may be singular while its tangent-angle map is regular.  Ordinary $3/2$-cusps and the~front-type $4/3$- and $5/4$-cusps belong to this class.  A~common tangent angle can then be used as a~parameter for both germs in a~parallel pair.  The~usual signed radius of curvature extends smoothly across their singular points, with value zero.  We prove that the~difference and the~projective ratio of the~two extended radii control, respectively, the~Wigner caustic and the~centre symmetry set.  These formulas yield criteria for ordinary and higher cusps, cuspidal-curvature formulas, and an~order-lowering principle.  For strictly convex ovals, they also give a~finite-order extension of the~classical cusp-count comparison, including multiplicity-weighted counts.

Second, at a~singular frontal which is not a~front both the~signed speed and the~angular speed vanish.  A~common angular parameter is no longer available on that germ.  If the~other germ in the~parallel pair has a~regular tangent-angle map, the~parallel-pair locus is nevertheless smooth.  For a~$5/2$-cusp with nonzero bias, direct formulas reveal two opposite signed-radius resonances: one changes the~induced Wigner singularity into a~$5/3$-cusp, whereas the~other places the~centre symmetry set at infinity.  We also determine the~corresponding transfer behaviour for a~$5/3$-cusp.  These phenomena motivate a~projective completion of the~centre symmetry set.

Except for the~oval cusp-count theorem in Section~\ref{sec:relations}, the~results are local.  The~two germs may belong to the~same frontal, provided that their parameters are distinct, or to two different frontals.  We exclude coincident chord endpoints and tangent chords whenever an~ordinary affine envelope is discussed.  These hypotheses separate the~new frontal phenomena from degeneracies of the~parallel-pair locus already present for regular curves with inflections.

\section{Frontal curves and cusp criteria}

\noindent Let $\gamma\colon(\R,0)\to\R^2$ be a~smooth map germ.  It is a~\emph{cooriented frontal} if it admits a~smooth unit normal field $\mathbbm{n}$ such that $\gamma'(t)\cdot\mathbbm{n}(t)=0$, where $\cdot$ denotes the standard dot product.  Choose the~unit tangent field $\mathbbm{e}$ so that $(\mathbbm{e},\mathbbm{n})$ is positively oriented.  Locally there are smooth functions $\ell$ and $\theta$ satisfying
\begin{equation}\label{eq:Legendre-data}
 \gamma'(t)=\ell(t)\mathbbm{e}(t),\qquad
 \mathbbm{e}(t)=(\cos\theta(t),\sin\theta(t)),\qquad
 \mathbbm{n}(t)=(-\sin\theta(t),\cos\theta(t)).
\end{equation}
Consequently,
\begin{equation}\label{eq:Frenet}
 \mathbbm{e}'(t)=\theta'(t)\mathbbm{n}(t),\qquad
 \mathbbm{n}'(t)=-\theta'(t)\mathbbm{e}(t).
\end{equation}
The~pair $(\gamma,\mathbbm{n})$ is an~immersion if and only if
\begin{equation}\label{eq:front-criterion}
 (\ell(t),\theta'(t))\ne(0,0).
\end{equation}
In this case $\gamma$ is called a~\emph{front}.  We call a~point \emph{angularly regular} if $\theta'(t)\ne0$.

At an~angularly regular point, put $u=\theta(t)-\theta(0)$ and denote the~inverse change of parameter by $t=t(u)$.  Then
\begin{equation}\label{eq:extended-radius}
 \frac{\dd\gamma}{\dd u}
 =\rho(u)\mathbbm{e}(u),
 \qquad
 \rho(u)=
 \frac{\ell(t(u))}{\theta'(t(u))}.
\end{equation}
At a~regular point, the~signed curvature is $\kappa=\theta'/\ell$, so that $\rho=1/\kappa$ is the~signed radius of curvature.  At a~singular point of an~angularly regular front, the~function $\rho$ remains smooth and vanishes.  We therefore call it the~\emph{extended signed radius of curvature}.

Two map germs $f,g\colon(\R,0)\to\R^2$ are $\Aeq$-equivalent if they differ by diffeomorphism germs in the~source and target (see, for instance, \cite{ArnoldGuseinZadeVarchenko,BruceGiblin} for background).  We use the~following standard terminology:
\begin{align*}
 3/2\text{-cusp}&\!:\quad t\mapsto(t^2,t^3),&
 4/3\text{-cusp}&\!:\quad t\mapsto(t^3,t^4),\\
 5/4\text{-cusp}&\!:\quad t\mapsto(t^4,t^5),&
 (5/4;\mathord\pm7)\text{-cusp}&\!:\quad t\mapsto(t^4,t^5\pm t^7),\\
 5/2\text{-cusp}&\!:\quad t\mapsto(t^2,t^5),&
 5/3\text{-cusp}&\!:\quad t\mapsto(t^3,t^5).
\end{align*}

We shall use the~following general jet criterion (see \cite[Theorems~4.1 and~4.13]{Matsushita} and \cite[Fact~2.4(3)--(4)]{Teramoto}).

\begin{proposition}\label{prop:five-four-jet-criterion}
Let $f\colon(\R,0)\to(\R^2,f(0))$ be a~smooth map germ satisfying $f'(0)=0$.  Then $f$ is $C^1$-equivalent to a~$5/4$-cusp if and only if
\begin{equation}\label{eq:five-four-C1-jet-criterion}
 f''(0)=f'''(0)=0,
 \qquad
 \det\bigl(f^{(4)}(0),f^{(5)}(0)\bigr)\ne0.
\end{equation}
Under these conditions, define $\mathcal P(f)$ as
\begin{align}\label{eq:five-four-jet-invariant}
 -77\det\bigl(f^{(4)},f^{(6)}\bigr)^2
+105\det\bigl(f^{(4)},f^{(5)}\bigr)
          \det\bigl(f^{(5)},f^{(6)}\bigr)
+60\det\bigl(f^{(4)},f^{(5)}\bigr)
         \det\bigl(f^{(4)},f^{(7)}\bigr),
\end{align}
where all derivatives are evaluated at $0$.  Then $f$ is a~$5/4$-cusp, a~$(5/4;+7)$-cusp, or a~$(5/4;-7)$-cusp according as $\mathcal P(f)$ is zero, positive, or negative, respectively.
\end{proposition}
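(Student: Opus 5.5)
The plan is to reduce to a normal form and then track how the three determinants in $\mathcal P(f)$ transform under the allowed changes of coordinates. Since this proposition is quoted from \cite{Matsushita,Teramoto}, the proof would mainly consist of checking that the cited criteria read exactly as stated, including the normalisation of $\mathcal P(f)$. A self-contained argument would go as follows.

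\emph{Necessity of \eqref{eq:five-four-C1-jet-criterion}.} Assume $f'(0)=f''(0)=f'''(0)=0$ and $\det(f^{(4)}(0),f^{(5)}(0))\ne0$. Apply a linear change of target coordinates sending $f^{(4)}(0)/4!$ to $(1,0)$ and $f^{(5)}(0)/5!$ to $(0,1)$. Then
\[
f(t)=\bigl(t^4+a_5t^5+a_6t^6+\dots,\; t^5+b_6t^6+b_7t^7+\dots\bigr).
\]
Next:
\begin{itemize}
\item Reparametrise the source by $s=t(1+a_5t/4+\dots)$, which makes the first component exactly $s^4$.
\item Remove the multiples of $s^4$ from the second component, using target diffeomorphisms of the form $(x,y)\mapsto(x,y-h(x))$.
\item Remove $s^6$, and more generally terms $s^{5+k}$, using $y\mapsto y-cxy$-type maps together with source changes preserving $s^4$ up to target maps.
\end{itemize}
The standard determinacy computation (as for $E_8$-type curve germs) shows that the only term that cannot be killed is $s^7$. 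So $f\sim_\Aeq(s^4,s^5+cs^7+O(s^8))$, and the germ is $8$-determined once $c\ne0$. Scaling $(x,y,s)\mapsto(\lambda^4x,\lambda^5y,\lambda s)$ normalises $c$ to $\pm1$. When $c=0$, the germ is equivalent to $(s^4,s^5)$; this requires showing that higher terms then become removable (the $5/4$-cusp has $\mathcal A_e$-codimension allowing this), and I would cite the classification of Bruce--Gaffney here. All these germs are $C^1$-equivalent to $(t^4,t^5)$ via a homeomorphism argument on the Puiseux characteristic, which is Matsushita's result.

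\emph{Converse.} If the jet conditions fail, then either the multiplicity is not $4$ or the characteristic exponent is not $5$. These are $C^1$ topological invariants (by Matsushita's Theorem 4.1), so $f$ is not $C^1$-equivalent to a $5/4$-cusp.

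\emph{Identifying $\mathcal P(f)$ with the sign of $c$.} This is the main obstacle. First check that $\mathcal P$ is invariant up to a positive factor under the allowed changes:
\begin{itemize}
\item Under a linear target change $A$, every determinant is multiplied by $\det A$. Since $\mathcal P$ is quadratic in determinants, it is multiplied by $(\det A)^2>0$.
\item Under $t\mapsto\lambda t+\mu t^2+\dots$, the derivatives $f^{(k)}(0)$ mix lower ones into higher ones via Fa\`a di Bruno. The specific coefficients $-77$, $105$, $60$ are exactly those for which the $\mu$- and higher-order contributions cancel. Verifying this is a finite polynomial computation in the $7$-jet.
\item Nonlinear target diffeomorphisms contribute only through $f^{(4)}$-quadratic terms, which appear at order $\ge8$, so the $7$-jet is unaffected.
\end{itemize}
It then suffices to evaluate $\mathcal P$ on the normal form $(s^4,s^5+cs^7)$. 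Here $\det(f^{(4)},f^{(6)})=0$, $\det(f^{(5)},f^{(6)})=0$, and $\mathcal P=60\cdot 4!\,5!\cdot 4!\,7!\,c$, which has the sign of $c$. If the invariance check turns out to be messy, I would instead transport the normal-form reduction explicitly. That means computing $c$ directly in terms of $a_5,a_6,b_6,b_7$ from the two-step reparametrisation, and comparing the result with $\mathcal P$ expanded in the same coefficients.
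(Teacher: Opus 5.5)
Your argument is correct, but it takes a different route from the paper. The paper proves this proposition purely by citation: the $C^1$ statement comes from Teramoto's Fact~2.4(3), and the zero and sign criteria from Matsushita's Theorems~4.1 and~4.13, ``written in the determinant convention adopted here''. You instead rebuild the $\Aeq$-part from the normal form $(s^4,s^5+cs^7+O(s^8))$ together with a relative invariance of $\mathcal P$.

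The step you flagged as the main obstacle does go through. Put $a_k=f^{(k)}(0)/k!$ and $D_{ij}=\det(a_i,a_j)$. Then
\[
 \mathcal P(f)=126\,(4!\,5!)^2\bigl(-22D_{46}^2+25D_{45}D_{56}+20D_{45}D_{47}\bigr).
\]
The reparametrisation $t\mapsto t+\mu t^2+\nu t^3+\cdots$ fixes $D_{45}$ and sends
$D_{46}\mapsto D_{46}+5\mu D_{45}$,
$D_{56}\mapsto D_{56}+4\mu D_{46}+(14\mu^2-4\nu)D_{45}$, and
$D_{47}\mapsto D_{47}+6\mu D_{46}+(10\mu^2+5\nu)D_{45}$.
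The coefficients of $\mu D_{45}D_{46}$, $\mu^2D_{45}^2$ and $\nu D_{45}^2$ are then $-220+100+120$, $-550+350+200$ and $-100+100$, all zero. Combine this with weight $20$ under $t\mapsto\lambda t$ and your target-side observation. You get $\mathcal P(\Phi\circ f\circ\phi)=(\det d\Phi_{f(0)})^2\,\phi'(0)^{20}\,\mathcal P(f)$. This shows two things: the sign of $\mathcal P$ equals the sign of $c$, and the three normal forms are pairwise $\Aeq$-inequivalent. Hence both directions of the ``according as'' follow.

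What your route buys:
\begin{itemize}
\item an independent check of the normalisation $-77,105,60$, which the paper leaves implicit;
\item an explanation of why $\mathcal P$ may be evaluated in any parametrisation, as is done in Lemma~\ref{lem:cusp-criteria}(vi).
\end{itemize}
What it does not remove is the dependence on the literature. You still need the determinacy of $(s^4,s^5)$ and $(s^4,s^5\pm s^7)$, and both directions of the $C^1$ claim. On those points you are citing exactly as the paper does.

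Minor slips:
\begin{itemize}
\item Your ``Necessity'' paragraph actually proves sufficiency.
\item The $s^6$ term is removed by $s\mapsto s+\alpha s^2$ followed by $x\mapsto x-4\alpha y$. It is not removed by $y\mapsto y-cxy$, which only reaches order~$9$.
\item The paper attributes the $C^1$ statement to Teramoto. Matsushita's Theorem~4.1 is the $\Aeq$-criterion for the $5/4$-cusp.
\end{itemize}
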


\begin{proof}
The~$C^1$ assertion is \cite[Fact~2.4(3)]{Teramoto}.  The~zero criterion and the~two sign criteria are \cite[Theorem~4.1]{Matsushita} and \cite[Theorem~4.13]{Matsushita}, respectively, written in the~determinant convention adopted here.
\end{proof}

We now express the~cusp criteria in the~Legendre data.  Items~(i)--(iv) below are recalled from \cite[Proposition~2.5]{Teramoto}, while items~(v)--(vi) follow from Proposition~\ref{prop:five-four-jet-criterion}.

\begin{lemma}\label{lem:cusp-criteria}
Let a~frontal germ have the~data $(\ell,\theta)$ in \eqref{eq:Legendre-data} and assume that its component functions are not flat at the~origin.
\begin{enumerate}[(i)]
\item It is a~$3/2$-cusp if and only if $\ell(0)=0$ and $\ell'(0)\theta'(0)\ne0$.
\item It is a~$4/3$-cusp if and only if $\ell(0)=\ell'(0)=0$ and $\ell''(0)\theta'(0)\ne0$.
\item It is a~$5/2$-cusp if and only if
      \begin{equation}\label{eq:five-two-criterion}
      \ell(0)=\theta'(0)=0,\qquad \ell'(0)\ne0,
      \qquad
      \ell'(0)\theta'''(0)-\ell''(0)\theta''(0)\ne0.
      \end{equation}
\item It is a~$5/3$-cusp if and only if
      \begin{equation}\label{eq:five-three-criterion}
      \ell(0)=\ell'(0)=\theta'(0)=0,\qquad
      \ell''(0)\theta''(0)\ne0.
      \end{equation}
\item Suppose that $\theta'(0)\ne0$, and let $\rho$ be the~extended signed radius of curvature defined in \eqref{eq:extended-radius}. The~germ is $C^1$-equivalent to a~$5/4$-cusp if and only if
      \begin{equation}\label{eq:five-four-C1-criterion}
      \rho(0)=\rho'(0)=\rho''(0)=0,
      \qquad \rho'''(0)\ne0.
      \end{equation}
\item Under \eqref{eq:five-four-C1-criterion}, define
      \begin{equation}\label{eq:universal-five-four-invariant}
      \mathcal Q(\rho)=35\rho^{(4)}(0)^2
      -48\rho'''(0)\rho^{(5)}(0)
      +2400\rho'''(0)^2.
      \end{equation}
The~germ is a~$5/4$-cusp, a~$(5/4;+7)$-cusp, or a $(5/4;-7)$-cusp according as $\mathcal Q(\rho)$ is zero, positive, or negative, respectively.
\end{enumerate}
\end{lemma}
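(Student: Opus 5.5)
Items~(i)--(iv) are quoted from \cite[Proposition~2.5]{Teramoto}, so the work lies in items~(v) and~(vi). The idea is to move everything into the angular parameter and then apply Proposition~\ref{prop:five-four-jet-criterion}. Since $\theta'(0)\ne0$, the change of parameter $t\mapsto u=\theta(t)-\theta(0)$ is a local diffeomorphism. Both $\mathcal A$-equivalence and $C^1$-equivalence are invariant under source diffeomorphisms, so we may work with $f(u)=\gamma(t(u))$.

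By \eqref{eq:extended-radius}, $f'(u)=\rho(u)\mathbbm e(u)$. Rotating the target, we may assume $\theta(0)=0$, so $\mathbbm e(u)=(\cos u,\sin u)$; in complex notation $f'(u)=\rho(u)e^{iu}$. The Leibniz rule then gives
\[
f^{(k+1)}(u)=\sum_{j=0}^{k}\binom{k}{j}\rho^{(j)}(u)\,i^{k-j}e^{iu}.
\]

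For item~(v), note that $\det(f^{(4)}(0),f^{(5)}(0))$ is the imaginary part of $\overline{f^{(4)}(0)}f^{(5)}(0)$. Suppose $\rho(0)=\rho'(0)=\rho''(0)=0$. Then $f'(0)=f''(0)=f'''(0)=0$, $f^{(4)}(0)=\rho'''(0)$ and $f^{(5)}(0)=\rho^{(4)}(0)+4i\rho'''(0)$. Hence the determinant equals $4\rho'''(0)^2$, which is nonzero exactly when $\rho'''(0)\ne0$.

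Conversely, suppose $f$ is $C^1$-equivalent to a $5/4$-cusp. Proposition~\ref{prop:five-four-jet-criterion} forces $f'(0)=f''(0)=f'''(0)=0$. The formulas above then give, successively, $\rho(0)=0$, $\rho'(0)=0$ and $\rho''(0)=0$, each derivative being a unit multiple of the new $\rho^{(j)}(0)$ plus earlier terms. With these vanishings the determinant computation shows $\rho'''(0)\ne0$. This proves~(v).

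For item~(vi), write $a_j=\rho^{(j)}(0)$ for $j=3,\dots,6$. The relevant derivatives are
\begin{align*}
f^{(4)}(0)&=a_3, & f^{(5)}(0)&=a_4+4ia_3,\\
f^{(6)}(0)&=a_5+5ia_4-10a_3, & f^{(7)}(0)&=a_6+6ia_5-15a_4-20ia_3.
\end{align*}
Using $\det(z,w)=\operatorname{Im}(\bar z w)$, we compute:
\begin{align*}
\det(f^{(4)},f^{(5)})&=4a_3^2,\\
\det(f^{(4)},f^{(6)})&=5a_3a_4,\\
\det(f^{(4)},f^{(7)})&=a_3(6a_5-20a_3),\\
\det(f^{(5)},f^{(6)})&=5a_4^2-4a_3a_5+40a_3^2.
\end{align*}
The $-10a_3$ term in $f^{(6)}$ contributes $40a_3^2$ to the last determinant.

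Substituting into \eqref{eq:five-four-jet-invariant} gives
\[
\mathcal P=-1925a_3^2a_4^2+420a_3^2(5a_4^2-4a_3a_5+40a_3^2)+240a_3^3(6a_5-20a_3).
\]
Collecting terms yields
\[
\mathcal P=a_3^2\bigl(175a_4^2-240a_3a_5+12000a_3^2\bigr)=5a_3^2\,\mathcal Q(\rho).
\]
Since $a_3\ne0$, $\mathcal P$ and $\mathcal Q(\rho)$ have the same sign. Proposition~\ref{prop:five-four-jet-criterion} then yields~(vi).

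The main obstacle is getting this bookkeeping exactly right: the sign conventions in $\det$ and the binomial coefficients. The computation is worth checking twice, since the constant $2400$ depends precisely on the $-10a_3$ and $-20ia_3$ terms. One must also confirm that the non-flatness hypothesis is compatible with the finite-determinacy statements quoted in Proposition~\ref{prop:five-four-jet-criterion}.
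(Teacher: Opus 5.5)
Your proposal is correct and follows the paper's proof essentially step for step: you pass to the angular parameter, compute $f^{(4)}(0),\dots,f^{(7)}(0)$ (your complex Leibniz formula is the paper's Frenet computation in different notation), obtain $\det(f^{(4)},f^{(5)})=4\rho'''(0)^2$ and $\mathcal P(f)=5\rho'''(0)^2\mathcal Q(\rho)$, and then invoke Proposition~\ref{prop:five-four-jet-criterion}. Your determinant values and the collection of terms check out. Your handling of the converse in (v), deriving $\rho(0)=\rho'(0)=\rho''(0)=0$ from the vanishing of $f'(0),f''(0),f'''(0)$ before using the determinant, is a small point of care that the paper leaves implicit.
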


\begin{proof}
Items~(i)--(iv) are the~standard Legendre-data forms of the~cusp criteria quoted above.  We verify the~two additional assertions.  Write the~germ in the~angular parameter as $f'(u)=\rho(u)\mathbbm{e}(u)$.  Formula \eqref{eq:Frenet} shows successively that
\begin{equation*}
 f'(0)=f''(0)=f'''(0)=0
 \quad\Longleftrightarrow\quad
 \rho(0)=\rho'(0)=\rho''(0)=0.
\end{equation*}
Under these vanishings, repeated differentiation gives
\begin{align*}
 f^{(4)}(0)&=\rho'''(0)\mathbbm{e}(0),\\
 f^{(5)}(0)&=\rho^{(4)}(0)\mathbbm{e}(0)
              +4\rho'''(0)\mathbbm{n}(0),&
              \\
 f^{(6)}(0)&=\bigl(\rho^{(5)}(0)-10\rho'''(0)\bigr)\mathbbm{e}(0)
              +5\rho^{(4)}(0)\mathbbm{n}(0),\\
 f^{(7)}(0)&=\bigl(\rho^{(6)}(0)-15\rho^{(4)}(0)\bigr)\mathbbm{e}(0)
              +\bigl(6\rho^{(5)}(0)-20\rho'''(0)\bigr)\mathbbm{n}(0).
\end{align*}
In particular,
\begin{equation*}
 \det\bigl(f^{(4)}(0),f^{(5)}(0)\bigr)=4\rho'''(0)^2,
\end{equation*}
so Proposition~\ref{prop:five-four-jet-criterion} proves~(v).  Direct substitution of the~displayed derivatives into \eqref{eq:five-four-jet-invariant} gives
\begin{equation*}
 \mathcal P(f)=5\rho'''(0)^2\mathcal Q(\rho).
\end{equation*}
Since $\rho'''(0)\ne0$, the~factor $5\rho'''(0)^2$ is positive.  Thus $\mathcal P(f)$ and $\mathcal Q(\rho)$ have the~same zero and the~same sign, and Proposition~\ref{prop:five-four-jet-criterion} proves~(vi).
\end{proof}

\begin{remark}\label{rem:angular-radius}
By Lemma~\ref{lem:cusp-criteria}, a~$3/2$-cusp, a~$4/3$-cusp, and any of the~$5/4$-types listed above correspond, respectively, to a~zero of the~extended signed radius of curvature $\rho$ of order $1$, $2$, and $3$.
\end{remark}

\section{Angularly regular parallel pairs}

\begin{definition}\label{def:parallel-pair}
Let $\gamma_+$ and $\gamma_-$ be two cooriented frontal germs.  We say that their base points form a~\emph{parallel pair} if their tangent lines are parallel.  Equivalently, after a~possible reversal of one of the tangent lifts, their unit tangent vectors at the~base points satisfy $\mathbbm{e}_+(0)=-\mathbbm{e}_-(0)$. The~line segment joining $\gamma_+(0)$ and $\gamma_-(0)$ is called the \emph{associated affine chord}.
\end{definition}

Consider two cooriented frontal germs $\gamma_+$ and $\gamma_-$ at a~parallel pair.  Assume first that both germs are angularly regular.  After choosing the~signs of the~tangent lifts, the~inverse function theorem provides a~common angular parameter $u$ in which
\begin{equation}\label{eq:common-angle}
 \gamma_+'(u)=\rho_+(u)\mathbbm{e}(u),\qquad
 \gamma_-'(u)=-\rho_-(u)\mathbbm{e}(u),
\end{equation}
where the~prime denotes differentiation with respect to $u$.  Set
\begin{equation}\label{eq:radius-chord-data}
 \Delta_\rho=\rho_+-\rho_-,\qquad
 \Sigma_\rho=\rho_++\rho_-,\qquad
 \Delta_\gamma=\gamma_+-\gamma_-.
\end{equation}
We always assume $\Delta_\gamma(0)\ne0$.

\subsection{The~Wigner caustic}

\begin{definition}\label{def:Wigner-branch}
The~local Wigner caustic branch generated by the~parallel pair $(\gamma_+,\gamma_-)$ is parametrised by
\begin{equation}\label{eq:Wigner-definition}
 \E_{1/2}(u)
 =\frac{\gamma_+(u)+\gamma_-(u)}{2}.
\end{equation}
Thus, it is the~locus of midpoints of the~chords joining corresponding points of the~nearby parallel pairs.
\end{definition}

For regular curves, the~regular-point and ordinary-cusp parts of the following result are classical (see \cite{Berry,DomitrzZwierSingular}).

\begin{theorem}\label{thm:Wigner-difference}
The~Wigner caustic satisfies
\begin{equation}\label{eq:Wigner-derivative}
 \E_{1/2}'(u)=\frac{\Delta_\rho(u)}{2}\mathbbm{e}(u).
\end{equation}
It is a~front, including at every singular point.  Moreover:
\begin{enumerate}[(i)]
\item $\E_{1/2}$ is regular at $0$ if and only if $\Delta_\rho(0)\ne0$;
\item $\E_{1/2}$ is a~$3/2$-cusp if and only if $\Delta_\rho(0)=0$ and $\Delta_\rho'(0)\ne0$;
\item $\E_{1/2}$ is a~$4/3$-cusp if and only if $\Delta_\rho(0)=\Delta_\rho'(0)=0$ and $\Delta_\rho''(0)\ne0$;
\item if $\Delta_\rho$, $\Delta_\rho'$, and $\Delta_\rho''$ vanish at $0$, while $\Delta_\rho'''(0)\ne0$, then $\E_{1/2}$ is $C^1$-equivalent to a~$5/4$-cusp.
\item Under the~conditions in~(iv), define
      \begin{equation}\label{eq:QW}
      Q_{\E_{1/2}}=
      35\Delta_\rho^{(4)}(0)^2
      -48\Delta_\rho'''(0)\Delta_\rho^{(5)}(0)
      +2400\Delta_\rho'''(0)^2.
      \end{equation}
Then $\E_{1/2}$ is a~$5/4$-cusp, a~$(5/4;+7)$-cusp, or a $(5/4;-7)$-cusp according as $Q_{\E_{1/2}}$ is zero, positive, or negative, respectively.
\end{enumerate}
\end{theorem}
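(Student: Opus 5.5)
The derivative formula comes straight from the definition: differentiating \eqref{eq:Wigner-definition} and substituting \eqref{eq:common-angle} gives
\[
 \E_{1/2}'(u)=\tfrac12\bigl(\rho_+(u)\mathbbm{e}(u)-\rho_-(u)\mathbbm{e}(u)\bigr)=\tfrac12\Delta_\rho(u)\mathbbm{e}(u),
\]
which is \eqref{eq:Wigner-derivative}.

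It follows that $\E_{1/2}$ is a cooriented frontal with unit normal $\mathbbm{n}(u)$, Legendre data $\ell=\Delta_\rho/2$, and tangent angle $\theta(u)=\theta(0)+u$. In particular $\theta'\equiv1\neq0$. The front criterion \eqref{eq:front-criterion} therefore holds at every point, singular or not, so $\E_{1/2}$ is a front. Since $\theta'=1$, the angular parameter of the caustic is $u$ itself, and its extended signed radius of curvature in the sense of \eqref{eq:extended-radius} is $\rho_{\E}=\ell/\theta'=\Delta_\rho/2$.

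Items (i)--(iv) now follow from this identification. Item (i) holds because $\E_{1/2}'(0)=0$ exactly when $\Delta_\rho(0)=0$. For (ii) and (iii) I would apply Lemma~\ref{lem:cusp-criteria}(i),(ii) with $\ell=\Delta_\rho/2$ and $\theta'=1$. Those conditions become $\Delta_\rho(0)=0$, $\Delta_\rho'(0)\ne0$, and $\Delta_\rho(0)=\Delta_\rho'(0)=0$, $\Delta_\rho''(0)\ne0$, respectively. Item (iv) is Lemma~\ref{lem:cusp-criteria}(v) applied to $\rho_{\E}=\Delta_\rho/2$, since the vanishing and nonvanishing conditions are unchanged by the factor $1/2$.

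For (v) I would use Lemma~\ref{lem:cusp-criteria}(vi), which classifies by the sign of $\mathcal Q(\rho_{\E})$. This is the one step needing care, because $\mathcal Q$ is not homogeneous. The first two terms of \eqref{eq:universal-five-four-invariant} scale by $1/4$ under $\rho\mapsto\rho/2$, and so does the term $2400\rho'''(0)^2$. So $\mathcal Q$ is in fact homogeneous of degree $2$ in $\rho$, and
\[
 \mathcal Q(\Delta_\rho/2)=\tfrac14\,Q_{\E_{1/2}}.
\]
The zero set and the sign are therefore preserved, and (v) follows.

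The only genuine check is consistency with Lemma~\ref{lem:cusp-criteria}: it assumes the angular parameter is $\theta-\theta(0)$, and here $\theta-\theta(0)=u$ exactly. I also need the non-flatness hypothesis, which is automatic under the stated finite-order conditions. So I expect no real obstacle; the step most worth double-checking is the degree-$2$ homogeneity of $\mathcal Q$ in (v).
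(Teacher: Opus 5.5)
Your proposal is correct and follows essentially the same route as the paper. You differentiate the midpoint using \eqref{eq:common-angle}, read off the Legendre data $\ell=\Delta_\rho/2$, $\theta'=1$ (so the caustic is a front and $u$ is its own angular parameter), then apply Lemma~\ref{lem:cusp-criteria}(i),(ii),(v),(vi), obtaining (v) from $\mathcal Q(\Delta_\rho/2)=\tfrac14 Q_{\E_{1/2}}$. The only blemish is your passing remark that $\mathcal Q$ ``is not homogeneous'': every term of \eqref{eq:universal-five-four-invariant} is quadratic in the derivatives of $\rho$, as you yourself conclude a line later.
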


\begin{proof}
Formula \eqref{eq:Wigner-derivative} follows immediately from \eqref{eq:common-angle}.  The~vector field $\mathbbm{n}(u)$ is normal to $\E_{1/2}$, and $\mathbbm{n}'(u)=-\mathbbm{e}(u)\ne0$.  Hence $(\E_{1/2},\mathbbm{n})$ is an~immersion even when $\E_{1/2}'(u)=0$.  Items~(i)--(iv) follow from Lemma~\ref{lem:cusp-criteria}, applied to the~Legendre data
\begin{equation*}
 \ell_{\E_{1/2}}=\frac{\Delta_\rho}{2},
 \qquad \theta_{\E_{1/2}}(u)=u.
\end{equation*}
Under the~conditions in~(iv), direct homogeneity gives
\begin{equation*}
 \mathcal Q\!\left(\frac{\Delta_\rho}{2}\right)
 =\frac14\left(
 35\Delta_\rho^{(4)}(0)^2
 -48\Delta_\rho'''(0)\Delta_\rho^{(5)}(0)
 +2400\Delta_\rho'''(0)^2\right)
 =\frac{Q_{\E_{1/2}}}{4}.
\end{equation*}
The~factor $1/4$ is positive, so Lemma~\ref{lem:cusp-criteria}(vi) proves~(v).
\end{proof}

We next recall the~cuspidal curvature introduced in \cite{ShibaUmehara} (see also \cite{UmeharaSajiYamada}).

\begin{definition}
If $f\colon(\R,0)\to\R^2$ is an~A-type germ, that is, $f'(0)=0$ and $f''(0)\ne0$, its cuspidal curvature is
\begin{equation}\label{eq:cuspidal-curvature-definition}
 \omega_f=\frac{\det(f''(0),f'''(0))}{|f''(0)|^{5/2}}.
\end{equation}
\end{definition}

It is invariant under orientation-preserving reparametrisations and oriented Euclidean motions. Reversing either orientation reverses its sign, so $|\omega_f|$ is independent of these choices.  Its nonvanishing is equivalent to the~ordinary-cusp condition, and its magnitude is a basic Euclidean measure of the~opening of the~cusp.

\begin{proposition}\label{prop:Wigner-cuspidal-curvature}
If $\E_{1/2}$ has a~$3/2$-cusp at $0$, its cuspidal curvature is
\begin{equation}\label{eq:Wigner-cuspidal-curvature}
 \omega_{\E_{1/2}}=
 \frac{\det(\E_{1/2}''(0),\E_{1/2}'''(0))}
 {|\E_{1/2}''(0)|^{5/2}}
 =\frac{2\sqrt2}{\sqrt{|\Delta_\rho'(0)|}}.
\end{equation}
\end{proposition}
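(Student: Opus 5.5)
The plan is a direct computation from the derivative formula \eqref{eq:Wigner-derivative} of Theorem~\ref{thm:Wigner-difference}, using the Frenet relations \eqref{eq:Frenet} in the common angular parameter $u$, where $\theta(u)=u$. In that parameter they read $\mathbbm{e}'=\mathbbm{n}$ and $\mathbbm{n}'=-\mathbbm{e}$. By Theorem~\ref{thm:Wigner-difference}(ii), the $3/2$-cusp hypothesis means exactly $\Delta_\rho(0)=0$ and $\Delta_\rho'(0)\ne0$. In particular $\E_{1/2}$ is an A-type germ at $0$, so the cuspidal curvature \eqref{eq:cuspidal-curvature-definition} is defined. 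I will compute it in the parameter $u$ and the orientation fixed by the frame $(\mathbbm{e},\mathbbm{n})$.

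First I differentiate $\E_{1/2}'=\tfrac12\Delta_\rho\,\mathbbm{e}$ once and twice. This gives
\begin{equation*}
 \E_{1/2}''=\tfrac12\Delta_\rho'\,\mathbbm{e}+\tfrac12\Delta_\rho\,\mathbbm{n},
 \qquad
 \E_{1/2}'''=\tfrac12\bigl(\Delta_\rho''-\Delta_\rho\bigr)\mathbbm{e}+\Delta_\rho'\,\mathbbm{n}.
\end{equation*}
The coefficient $\Delta_\rho'$ of $\mathbbm{n}$ in the second formula collects two equal contributions $\tfrac12\Delta_\rho'$: one from differentiating $\tfrac12\Delta_\rho'\mathbbm{e}$ and one from differentiating $\tfrac12\Delta_\rho\mathbbm{n}$. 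Evaluating at $0$ with $\Delta_\rho(0)=0$ yields
\begin{equation*}
 \E_{1/2}''(0)=\tfrac12\Delta_\rho'(0)\mathbbm{e}(0),
 \qquad
 \E_{1/2}'''(0)=\tfrac12\Delta_\rho''(0)\mathbbm{e}(0)+\Delta_\rho'(0)\mathbbm{n}(0).
\end{equation*}

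Since $(\mathbbm{e},\mathbbm{n})$ is a positively oriented orthonormal frame, $\det(\E_{1/2}''(0),\E_{1/2}'''(0))=\tfrac12\Delta_\rho'(0)^2$. The $\mathbbm{e}$-component of $\E_{1/2}'''(0)$ does not contribute to this determinant. Also $|\E_{1/2}''(0)|^{5/2}=2^{-5/2}|\Delta_\rho'(0)|^{5/2}$. Taking the quotient gives $2^{3/2}|\Delta_\rho'(0)|^{-1/2}=2\sqrt2/\sqrt{|\Delta_\rho'(0)|}$, which is \eqref{eq:Wigner-cuspidal-curvature}.

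The only point needing care is the sign and orientation convention. The value is positive because both the parametrisation and the ambient orientation are the ones induced by $u$ and the positive frame. Under other choices only the sign changes, by the invariance remarks preceding the proposition, while the magnitude $|\omega_{\E_{1/2}}|$ is unaffected. Beyond that, the computation is routine bookkeeping of the Frenet terms in the third derivative.
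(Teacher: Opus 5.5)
Your proposal is correct and matches the paper's proof: you differentiate \eqref{eq:Wigner-derivative} twice in the angular parameter, use $\Delta_\rho(0)=0$ to get $\E_{1/2}''(0)=\tfrac12\Delta_\rho'(0)\mathbbm{e}(0)$ and $\E_{1/2}'''(0)=\tfrac12\Delta_\rho''(0)\mathbbm{e}(0)+\Delta_\rho'(0)\mathbbm{n}(0)$, and take the quotient. Your intermediate formulas before evaluating at $0$ and your remark on orientation conventions are accurate, though the paper does not need them.
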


\begin{proof}
At the~cusp, direct differentiation of \eqref{eq:Wigner-derivative} gives
\begin{equation*}
 \E_{1/2}''(0)=\frac{\Delta_\rho'(0)}2\mathbbm{e}(0),\qquad
 \E_{1/2}'''(0)=\frac{\Delta_\rho''(0)}2\mathbbm{e}(0)
                 +\Delta_\rho'(0)\mathbbm{n}(0).
\end{equation*}
Hence
\begin{equation*}
 \det(\E_{1/2}''(0),\E_{1/2}'''(0))
 =\frac{\Delta_\rho'(0)^2}{2},
 \qquad |\E_{1/2}''(0)|=\frac{|\Delta_\rho'(0)|}{2},
\end{equation*}
which gives \eqref{eq:Wigner-cuspidal-curvature}.
\end{proof}

\subsection{The~centre symmetry set}

Assume in addition that the~chord is transverse to the~common tangent line,
\begin{equation}\label{eq:chord-transverse}
 \det(\Delta_\gamma(0),\mathbbm{e}(0))\ne0.
\end{equation}

\begin{definition}\label{def:CSS-branch}
The~local \emph{centre symmetry set} branch ($\CSS$) generated by the~parallel pair $(\gamma_+,\gamma_-)$ is the~envelope of the~family of chord lines parametrised by
\begin{equation}\label{eq:chord-family}
 L(u,\lambda)
 =\gamma_-(u)+\lambda\Delta_\gamma(u).
\end{equation}
Equivalently, it is the~set of critical values of the~map $L\colon(u,\lambda)\mapsto L(u,\lambda)$.
\end{definition}

When the~two germs belong to the~same frontal, the~union of these local branches over all distinct parallel pairs gives the~corresponding global Wigner caustic and centre symmetry set.

\begin{proposition}\label{prop:CSS-envelope}
Suppose that
\begin{equation}\label{eq:finite-CSS-condition}
 \Sigma_\rho(0)\ne0.
\end{equation}
The~envelope of \eqref{eq:chord-family} is
\begin{equation}\label{eq:CSS-formula}
 S(u)=\gamma_-(u)
 +\frac{\rho_-(u)}{\Sigma_\rho(u)}\Delta_\gamma(u).
\end{equation}
It satisfies
\begin{equation}\label{eq:CSS-derivative}
 S'(u)=
 \left(\frac{\rho_-}{\Sigma_\rho}\right)'\!(u)\Delta_\gamma(u)
 =\frac{\mathcal J_\rho(u)}{\Sigma_\rho(u)^2}\Delta_\gamma(u),
 \qquad
 \mathcal J_\rho=\rho_+\rho_-'-\rho_-\rho_+'.
\end{equation}
\end{proposition}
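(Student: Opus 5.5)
We compute the envelope directly as the set of critical values of $L(u,\lambda)=\gamma_-(u)+\lambda\Delta_\gamma(u)$, using the common angular parametrisation \eqref{eq:common-angle}. The partial derivatives are
\begin{equation*}
 L_\lambda=\Delta_\gamma(u),\qquad
 L_u=\gamma_-'(u)+\lambda\Delta_\gamma'(u)
 =\bigl(-\rho_-(u)+\lambda\Sigma_\rho(u)\bigr)\mathbbm{e}(u).
\end{equation*}
Here we used $\Delta_\gamma'=\gamma_+'-\gamma_-'=(\rho_++\rho_-)\mathbbm{e}=\Sigma_\rho\mathbbm{e}$. The Jacobian determinant is therefore $\det(L_u,L_\lambda)=(\lambda\Sigma_\rho-\rho_-)\det(\mathbbm{e},\Delta_\gamma)$.

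By the transversality assumption \eqref{eq:chord-transverse}, $\det(\mathbbm{e}(u),\Delta_\gamma(u))\neq0$ for $u$ near $0$. Hence the critical set is exactly $\lambda\Sigma_\rho(u)=\rho_-(u)$. Under \eqref{eq:finite-CSS-condition}, $\Sigma_\rho(u)\ne0$ near $0$, so this critical set is the smooth graph $\lambda(u)=\rho_-(u)/\Sigma_\rho(u)$. Substituting it into $L$ yields exactly \eqref{eq:CSS-formula}. For completeness, one can also check the envelope tangency condition: along the critical curve $L_u=0$, so $S'(u)=\lambda'(u)\Delta_\gamma(u)$ is parallel to the chord line, which is consistent with $S$ being the envelope.

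For the derivative, we differentiate $S=\gamma_-+\lambda\Delta_\gamma$:
\begin{equation*}
 S'=\gamma_-'+\lambda\Delta_\gamma'+\lambda'\Delta_\gamma
 =(-\rho_-+\lambda\Sigma_\rho)\mathbbm{e}+\lambda'\Delta_\gamma
 =\lambda'\Delta_\gamma .
\end{equation*}
The quotient rule then gives
\begin{equation*}
 \Bigl(\frac{\rho_-}{\rho_++\rho_-}\Bigr)'
 =\frac{\rho_-'(\rho_++\rho_-)-\rho_-(\rho_+'+\rho_-')}{\Sigma_\rho^2}
 =\frac{\rho_+\rho_-'-\rho_-\rho_+'}{\Sigma_\rho^2}
 =\frac{\mathcal J_\rho}{\Sigma_\rho^2}.
\end{equation*}
This is \eqref{eq:CSS-derivative}.

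The computations are routine. The only point needing care is the identification of the envelope with the critical-value set. That identification depends on the nonvanishing of $\det(\mathbbm{e},\Delta_\gamma)$, so that the critical set is cut out by the single scalar factor $\lambda\Sigma_\rho-\rho_-$, and on the nonvanishing of $\Sigma_\rho$, so that this factor is solvable for $\lambda$ as a smooth function of $u$. When $\Sigma_\rho(0)=0$, the solution $\lambda$ blows up; this is the point at infinity that the later sections address.
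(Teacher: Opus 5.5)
Your proposal is correct and follows essentially the same route as the paper. You compute $\partial_u L=(\lambda\Sigma_\rho-\rho_-)\mathbbm{e}$ and use transversality \eqref{eq:chord-transverse} to force this coefficient to vanish; your vanishing of $\det(L_u,L_\lambda)$ is the same condition as the paper's requirement that $\partial_u L$ be parallel to $\Delta_\gamma$. You then get $S'=\lambda'\Delta_\gamma$ from the cancellation of the $\mathbbm{e}$-terms, and the second equality in \eqref{eq:CSS-derivative} from the quotient rule, exactly as the paper does.
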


\begin{proof}
By \eqref{eq:common-angle},
\begin{equation}\label{eq:Delta-gamma-derivative}
 \Delta_\gamma'=\Sigma_\rho\mathbbm{e}.
\end{equation}
For fixed $\lambda$, differentiation of \eqref{eq:chord-family} gives $\partial_u L=-\rho_-\mathbbm{e} +\lambda\Sigma_\rho\mathbbm{e}$. At an~envelope point this vector is parallel to $\Delta_\gamma$.  By \eqref{eq:chord-transverse}, its coefficient must vanish, which gives $\lambda=\rho_-/\Sigma_\rho$.  Differentiating \eqref{eq:CSS-formula}, the~terms parallel to $\mathbbm{e}$ cancel and yield the~first equality in \eqref{eq:CSS-derivative}.  The~second follows by differentiating $\rho_-/\Sigma_\rho$.
\end{proof}

For regular curves, the~description of the~$\CSS$ as the~discriminant swept out by singular points of affine equidistants is classical (see \cite{DomitrzRiosGCS,GiblinHoltom,GiblinZakalyukin}).

\begin{corollary}\label{cor:CSS-equidistant-discriminant}
For a~fixed $\lambda\in\R$, let
\begin{equation*}
 \E_\lambda(u)=\lambda\gamma_+(u)+(1-\lambda)\gamma_-(u)
             =\gamma_-(u)+\lambda\Delta_\gamma(u)
\end{equation*}
be the~local affine $\lambda$-equidistant set.  Then
\begin{equation}\label{eq:equidistant-speed}
 \E_\lambda'(u)=
 \bigl(\lambda\rho_+(u)-(1-\lambda)\rho_-(u)\bigr)\mathbbm{e}(u)
 =\bigl(\lambda\Sigma_\rho(u)-\rho_-(u)\bigr)\mathbbm{e}(u).
\end{equation}
Under \eqref{eq:finite-CSS-condition}, $\E_\lambda$ is singular at $u$ if and only if $\lambda=\rho_-(u)/\Sigma_\rho(u)$.  Its singular value is then $S(u)$.  Hence the finite $\CSS$ branch is the~discriminant of the~one-parameter family of affine equidistant sets.  In particular, $\E_{1/2}$ is the~Wigner caustic.
\end{corollary}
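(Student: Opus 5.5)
The plan is to differentiate $\E_\lambda$ directly, using the~common angular parametrisation \eqref{eq:common-angle}, and then read off the~singular set from the~scalar coefficient of $\mathbbm{e}$. Since $\lambda$ is fixed, differentiating $\E_\lambda=\lambda\gamma_++(1-\lambda)\gamma_-$ gives
\begin{equation*}
 \E_\lambda'=\lambda\rho_+\mathbbm{e}-(1-\lambda)\rho_-\mathbbm{e}.
\end{equation*}
Rewriting the~coefficient as $\lambda(\rho_++\rho_-)-\rho_-=\lambda\Sigma_\rho-\rho_-$ gives \eqref{eq:equidistant-speed}. Alternatively, one can use the~second expression $\E_\lambda=\gamma_-+\lambda\Delta_\gamma$ together with \eqref{eq:Delta-gamma-derivative}, which yields the~same formula at once.

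Next comes the~characterisation of singular points. Because $\mathbbm{e}(u)$ is a~unit vector, $\E_\lambda'(u)=0$ holds exactly when $\lambda\Sigma_\rho(u)=\rho_-(u)$. Under \eqref{eq:finite-CSS-condition}, and after shrinking the~neighbourhood so that $\Sigma_\rho$ does not vanish, this is equivalent to $\lambda=\rho_-(u)/\Sigma_\rho(u)$. Substituting this value into $\gamma_-(u)+\lambda\Delta_\gamma(u)$ gives exactly \eqref{eq:CSS-formula}, so the~singular value is $S(u)$.

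For the~discriminant statement, consider the~family $(u,\lambda)\mapsto\E_\lambda(u)=L(u,\lambda)$. By the~previous step, its set of singular points (in $u$ for fixed $\lambda$) is the~graph $\lambda=\rho_-/\Sigma_\rho$, and its image is $\{S(u)\}$. This is precisely the~envelope set identified in Proposition~\ref{prop:CSS-envelope}. Indeed, this also agrees with the~critical-value description of Definition~\ref{def:CSS-branch}: $\partial_\lambda L=\Delta_\gamma$ and $\partial_uL=(\lambda\Sigma_\rho-\rho_-)\mathbbm{e}$, and by \eqref{eq:chord-transverse} these are linearly dependent exactly when the~coefficient vanishes. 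Finally, taking $\lambda=1/2$ recovers \eqref{eq:Wigner-definition}, so $\E_{1/2}$ is the~Wigner caustic.

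The~argument is a~short computation and there is no real obstacle. The~only point needing care is the~equivalence between "envelope" and "critical values of $L$", and \eqref{eq:chord-transverse} is exactly what settles it.
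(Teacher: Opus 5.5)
Your proposal is correct and follows essentially the same route as the paper: differentiate $\E_\lambda$ using \eqref{eq:common-angle}, use $\Sigma_\rho\ne0$ to solve for the vanishing of the coefficient as $\lambda=\rho_-/\Sigma_\rho$, and substitute to obtain $S(u)$. Your extra check is a correct addition that the paper leaves implicit: under \eqref{eq:chord-transverse}, the singular points of the family coincide with the critical points of $L$.
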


\begin{proof}
Differentiate $\E_\lambda$ and use \eqref{eq:common-angle}.  Since $\Sigma_\rho\ne0$ near $0$, the~coefficient in \eqref{eq:equidistant-speed} vanishes precisely at $\lambda=\rho_-/\Sigma_\rho$.  Substitution gives $\E_\lambda=S$ at the corresponding singular point.
\end{proof}

Let $\mathbbm{e}_\Delta=\Delta_\gamma/|\Delta_\gamma|$ and let $\varphi$ be a~local angle of the~chord, so that $\mathbbm{e}_\Delta=(\cos\varphi,\sin\varphi)$.  From \eqref{eq:Delta-gamma-derivative},
\begin{equation}\label{eq:chord-angle}
 \varphi'=\frac{\det(\Delta_\gamma,\Delta_\gamma')}
 {|\Delta_\gamma|^2}
 =\frac{\Sigma_\rho\det(\Delta_\gamma,\mathbbm{e})}
 {|\Delta_\gamma|^2}.
\end{equation}
Conditions \eqref{eq:chord-transverse} and \eqref{eq:finite-CSS-condition} show that $\varphi'(0)\ne0$.  Thus $S$ is a~front with tangent field $\mathbbm{e}_\Delta$.  In the~chord-angle parameter,
\begin{equation}\label{eq:CSS-signed-speed}
 \frac{\dd S}{\dd\varphi}=\sigma\mathbbm{e}_\Delta,
 \qquad
 \sigma=\frac{\mathcal J_\rho|\Delta_\gamma|^3}
 {\Sigma_\rho^3\det(\Delta_\gamma,\mathbbm{e})}.
\end{equation}

The~smooth $5/4$-type of the~$\CSS$ can be read from the~same one-variable data.  Introduce the~differential operator
\begin{equation}\label{eq:chord-angle-operator}
 \mathcal D_\varphi=
 \frac{|\Delta_\gamma|^2}
 {\Sigma_\rho\det(\Delta_\gamma,\mathbbm{e})}
 \frac{\dd}{\dd u}=\frac{\dd}{\dd\varphi}
\end{equation}
and, for $j\geq0$, put
\begin{equation}\label{eq:sigma-jets}
 \sigma_j=(\mathcal D_\varphi^j\sigma)(0).
\end{equation}

For regular curves, the~regular-point criterion and the~ordinary-cusp criterion in the~next theorem were obtained in \cite{GiblinHoltom}.

\begin{theorem}\label{thm:CSS-classification}
Under assumptions \eqref{eq:chord-transverse} and \eqref{eq:finite-CSS-condition}, the~centre symmetry set $S$ is regular at $0$ if and only if $\mathcal J_\rho(0)\ne0$.  Moreover:
\begin{enumerate}[(i)]
\item $S$ is a~$3/2$-cusp if and only if $\mathcal J_\rho(0)=0$ and $\mathcal J_\rho'(0)\ne0$;
\item $S$ is a~$4/3$-cusp if and only if $\mathcal J_\rho(0)=\mathcal J_\rho'(0)=0$ and $\mathcal J_\rho''(0)\ne0$;
\item if $\mathcal J_\rho$, $\mathcal J_\rho'$, and $\mathcal J_\rho''$ vanish at $0$, while $\mathcal J_\rho'''(0)\ne0$, then $S$ is $C^1$-equivalent to a~$5/4$-cusp.
\item Under the~conditions in~(iii), define
      \begin{equation}\label{eq:QS}
      Q_S=35\sigma_4^2-48\sigma_3\sigma_5+2400\sigma_3^2.
      \end{equation}
Then $S$ is a~$5/4$-cusp, a~$(5/4;+7)$-cusp, or a $(5/4;-7)$-cusp according as $Q_S$ is zero, positive, or negative, respectively.
\end{enumerate}
Equivalently, wherever it is defined, singular points of $S$ are the~critical points of the~projective ratio $[\rho_+:\rho_-]\colon(\R,0)\to\RP^1$.
\end{theorem}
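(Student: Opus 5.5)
The plan is to reduce everything to Lemma~\ref{lem:cusp-criteria}, applied to $S$ with its Legendre data in the chord-angle parameter $\varphi$. By \eqref{eq:chord-angle}, $\varphi'(0)\ne0$ under \eqref{eq:chord-transverse} and \eqref{eq:finite-CSS-condition}. So $u\mapsto\varphi$ is a local diffeomorphism, and \eqref{eq:CSS-signed-speed} shows that $S$, reparametrised by $\varphi$, is a cooriented frontal with tangent field $\mathbbm{e}_\Delta$, tangent angle $\theta_S(\varphi)=\varphi$ and signed speed $\ell_S=\sigma$. Thus $\theta_S'\equiv1$, $S$ is an angularly regular front, and its extended signed radius of curvature \eqref{eq:extended-radius} is exactly $\rho_S=\sigma$.

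Next I translate the vanishing orders of $\sigma$ into those of $\mathcal J_\rho$. By \eqref{eq:CSS-signed-speed}, $\sigma=h\,\mathcal J_\rho$ with $h=|\Delta_\gamma|^3/(\Sigma_\rho^3\det(\Delta_\gamma,\mathbbm{e}))$, which is smooth and nonvanishing near $0$ by the standing assumptions. Also $\dd/\dd\varphi$ equals $\dd/\dd u$ multiplied by the nonvanishing smooth factor in \eqref{eq:chord-angle-operator}. A Leibniz-rule induction then shows that $\sigma$ has a zero of order exactly $k$ at $0$ as a function of $\varphi$ if and only if $\mathcal J_\rho$ has a zero of order exactly $k$ as a function of $u$. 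Moreover, at the first nonvanishing derivative the two are related by a nonzero factor, namely $\sigma_k=h(0)\,(\mathcal D_\varphi u)^k(0)\,\mathcal J_\rho^{(k)}(0)$.

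With this in hand the items follow directly. Regularity at $0$ is $\sigma(0)\ne0$, i.e.\ $\mathcal J_\rho(0)\ne0$. Items (i) and (ii) follow from Lemma~\ref{lem:cusp-criteria}(i),(ii) with $\theta_S'=1$, using the order-$1$ and order-$2$ cases. Item (iii) follows from Lemma~\ref{lem:cusp-criteria}(v) applied to $\rho_S=\sigma$. For (iv), Lemma~\ref{lem:cusp-criteria}(vi) applies verbatim: its invariant $\mathcal Q(\rho_S)$ is built from the $\varphi$-derivatives $\sigma_3,\sigma_4,\sigma_5$ by \eqref{eq:sigma-jets}, and so equals $Q_S$. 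For the final sentence, note that $[\rho_+:\rho_-]$ is well defined near $0$ because $\Sigma_\rho(0)\ne0$ excludes $\rho_+=\rho_-=0$. In the affine chart $\rho_-/\Sigma_\rho$ (or equivalently $\rho_+/\rho_-$ or $\rho_-/\rho_+$), its derivative is a nonzero multiple of $\mathcal J_\rho=\rho_+\rho_-'-\rho_-\rho_+'$, as \eqref{eq:CSS-derivative} already shows. Hence critical points of the projective ratio are exactly the zeros of $\mathcal J_\rho$, i.e.\ the singular points of $S$.

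The main obstacle is the bookkeeping in the order-transfer step. One must make sure the Lemma is applied in the parameter $\varphi$, not $u$, since the $5/4$-invariant is not reparametrisation-free in the naive sense. This is exactly why $Q_S$ is stated using the $\sigma_j$ rather than the $u$-derivatives of $\mathcal J_\rho$. Once the Lemma is applied in $\varphi$, items (iii) and (iv) need no computation beyond identifying $\rho_S=\sigma$.
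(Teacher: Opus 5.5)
Your proposal is correct and follows essentially the same route as the paper: reparametrise by the chord angle $\varphi$, identify $\sigma$ as the extended signed radius of curvature of $S$, transfer vanishing orders from $\mathcal J_\rho$ to $\sigma$ through the nonvanishing factor in \eqref{eq:CSS-signed-speed}, and then invoke Lemma~\ref{lem:cusp-criteria}; the projective statement is read off from the derivative of an affine coordinate of $[\rho_+:\rho_-]$. Your single chart $\rho_-/\Sigma_\rho$, valid exactly where $\Sigma_\rho\ne0$, is slightly tidier than the paper's two charts $\rho_+/\rho_-$ and $\rho_-/\rho_+$, but the difference is cosmetic.
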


\begin{proof}
All factors multiplying $\mathcal J_\rho$ in \eqref{eq:CSS-signed-speed} are nonzero near $0$.  The~order of the~signed speed $\sigma$, with respect to the~regular parameter $\varphi$, is therefore the~order of $\mathcal J_\rho$. The~regularity assertion and items~(i)--(iii) follow from Lemma~\ref{lem:cusp-criteria}.  Under the~conditions in~(iii), the nonvanishing factors in \eqref{eq:CSS-signed-speed} imply $\sigma_0=\sigma_1=\sigma_2=0$ and $\sigma_3\ne0$.  In the~parameter $\varphi$ one has
\begin{equation*}
 \frac{\dd S}{\dd\varphi}=\sigma\mathbbm{e}_\Delta,
 \qquad \mathbbm{e}_\Delta=(\cos\varphi,\sin\varphi).
\end{equation*}
Thus $\sigma$ is the~extended signed radius of curvature of $S$ in its angular parameter, and Lemma~\ref{lem:cusp-criteria}(vi) proves~(iv).

In an~affine chart of $\RP^1$ in which $\rho_-\ne0$, the~derivative of $\rho_+/\rho_-$ is
\begin{equation*}
 \left(\frac{\rho_+}{\rho_-}\right)'
 =-\frac{\mathcal J_\rho}{\rho_-^2}.
\end{equation*}
The~other chart is analogous.
\end{proof}

\begin{proposition}\label{prop:CSS-cuspidal-curvature}
If $S$ has a~$3/2$-cusp at $0$, then
\begin{equation}\label{eq:CSS-cuspidal-curvature}
 \omega_S=
 \frac{2\Sigma_\rho(0)
 \det(\Delta_\gamma(0),\mathbbm{e}(0))}
 {\sqrt{\left|\left(\dfrac{\rho_-}{\Sigma_\rho}\right)''\!(0)\right|}
 \,|\Delta_\gamma(0)|^{5/2}},
 \qquad
 \left(\frac{\rho_-}{\Sigma_\rho}\right)''\!(0)
 =\frac{\mathcal J_\rho'(0)}{\Sigma_\rho(0)^2}.
\end{equation}
\end{proposition}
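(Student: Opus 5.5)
Write $\mu=\rho_-/\Sigma_\rho$, so that by \eqref{eq:CSS-derivative} we have $S'=\mu'\Delta_\gamma$. At a $3/2$-cusp, Theorem~\ref{thm:CSS-classification}(i) gives $\mathcal J_\rho(0)=0$ and $\mathcal J_\rho'(0)\ne0$, hence $\mu'(0)=0$. Differentiating $\mu'=\mathcal J_\rho/\Sigma_\rho^2$ and using $\mathcal J_\rho(0)=0$ gives
\[
\mu''(0)=\frac{\mathcal J_\rho'(0)}{\Sigma_\rho(0)^2}\ne0 .
\]
This is the second identity in \eqref{eq:CSS-cuspidal-curvature}.

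The plan is to differentiate $S'=\mu'\Delta_\gamma$ twice more, using \eqref{eq:Delta-gamma-derivative}, namely $\Delta_\gamma'=\Sigma_\rho\mathbbm{e}$. At $u=0$, where $\mu'(0)=0$, we obtain
\[
S''(0)=\mu''(0)\Delta_\gamma(0),\qquad
S'''(0)=\mu'''(0)\Delta_\gamma(0)+2\mu''(0)\Sigma_\rho(0)\mathbbm{e}(0).
\]
The component of $S'''(0)$ along $\Delta_\gamma(0)$ drops out of the determinant, so
\[
\det(S''(0),S'''(0))=2\mu''(0)^2\Sigma_\rho(0)\det(\Delta_\gamma(0),\mathbbm{e}(0)),
\qquad
|S''(0)|=|\mu''(0)|\,|\Delta_\gamma(0)|.
\]

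Substituting these into the definition \eqref{eq:cuspidal-curvature-definition} gives
\[
\omega_S=\frac{2\mu''(0)^2\Sigma_\rho(0)\det(\Delta_\gamma(0),\mathbbm{e}(0))}{|\mu''(0)|^{5/2}|\Delta_\gamma(0)|^{5/2}}.
\]
Since $\mu''(0)^2/|\mu''(0)|^{5/2}=|\mu''(0)|^{-1/2}$, this is exactly \eqref{eq:CSS-cuspidal-curvature}. The cuspidal curvature is computed in the parameter $u$; this is legitimate because $\omega$ is invariant under orientation-preserving reparametrisations, as noted after \eqref{eq:cuspidal-curvature-definition}.

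There is no real obstacle. The only points requiring care are bookkeeping ones: one must use $\mu'(0)=0$ to drop the term $\mu'\Delta_\gamma'$ in $S''$, and one must keep the sign of $\Sigma_\rho(0)\det(\Delta_\gamma(0),\mathbbm{e}(0))$ rather than its absolute value.
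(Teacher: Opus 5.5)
Your proposal is correct and is essentially the paper's proof. Both arguments take $\mu'(0)=0$ from Theorem~\ref{thm:CSS-classification}, differentiate $S'=\mu'\Delta_\gamma$ using $\Delta_\gamma'=\Sigma_\rho\mathbbm{e}$ to get the same $S''(0)$ and $S'''(0)$, and then obtain the determinant and the identity $\mu''(0)=\mathcal J_\rho'(0)/\Sigma_\rho(0)^2$ from $\mu'=\mathcal J_\rho/\Sigma_\rho^2$ and $\mathcal J_\rho(0)=0$.
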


\begin{proof}
At a~cusp, by Theorem \ref{thm:CSS-classification} we get that $\left(\rho_-/\Sigma_\rho\right)'(0)=0$, and \eqref{eq:CSS-derivative} gives
\begin{equation*}
 S''(0)=
 \left(\frac{\rho_-}{\Sigma_\rho}\right)''\!(0)\Delta_\gamma(0),
 \qquad
 S'''(0)=
 \left(\frac{\rho_-}{\Sigma_\rho}\right)'''\!(0)\Delta_\gamma(0)
 +2\left(\frac{\rho_-}{\Sigma_\rho}\right)''\!(0)
 \Sigma_\rho(0)\mathbbm{e}(0).
\end{equation*}
Therefore
\begin{equation*}
 \det(S''(0),S'''(0))
 =2\left(\frac{\rho_-}{\Sigma_\rho}\right)''\!(0)^2
 \Sigma_\rho(0)\det(\Delta_\gamma(0),\mathbbm{e}(0)).
\end{equation*}
Dividing by $|S''(0)|^{5/2}$ proves the~first formula.  The~second follows from $\left(\rho_-/\Sigma_\rho\right)'=\mathcal J_\rho/\Sigma_\rho^2$ and $\mathcal J_\rho(0)=0$.
\end{proof}

\section{Relations between the~two constructions}
\label{sec:relations}

\noindent The~conditions controlling the~two branches admit a~compact projective interpretation.  The~Wigner caustic is singular when $\Delta_\rho=0$.  If $\Sigma_\rho=0$ and $\rho_+\rho_-\ne0$, the~affine centre symmetry set goes to infinity. Simultaneous zeros require the~separate analysis in Section~\ref{sec:projective-CSS}.  Under \eqref{eq:chord-transverse} and \eqref{eq:finite-CSS-condition}, the~centre symmetry set is singular when the~projective ratio $[\rho_+:\rho_-]$ is critical.  The~following order-lowering principle makes the~relation more precise.

\begin{theorem}\label{thm:order-lowering}
Assume \eqref{eq:chord-transverse} and
\begin{equation}\label{eq:equal-nonzero-radii}
 \rho_+(0)=\rho_-(0)\ne0.
\end{equation}
If $\Delta_\rho$ has a~zero of order $m\geq1$ at $0$, then $\mathcal J_\rho$ has a~zero of order $m-1$.  Furthermore,
\begin{equation}\label{eq:W-equals-S}
 \E_{1/2}(0)=S(0)=\frac{\gamma_+(0)+\gamma_-(0)}2.
\end{equation}
Consequently:
\begin{enumerate}[(i)]
\item a~$3/2$-cusp of $\E_{1/2}$ lies on a~regular part of $S$;
\item a~$4/3$-cusp of $\E_{1/2}$ is a~$3/2$-cusp of $S$;
\item a~$5/4$- or $(5/4;\mathord\pm7)$-cusp of $\E_{1/2}$ is a $4/3$-cusp of $S$.
\end{enumerate}
\end{theorem}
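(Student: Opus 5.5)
The plan is to express $\mathcal J_\rho$ through $\Delta_\rho$ and $\Sigma_\rho$ instead of $\rho_\pm$. Since $\rho_\pm=(\Sigma_\rho\pm\Delta_\rho)/2$, a direct substitution gives
\begin{equation*}
 \mathcal J_\rho=\rho_+\rho_-'-\rho_-\rho_+'
 =\tfrac14\bigl((\Sigma_\rho+\Delta_\rho)(\Sigma_\rho'-\Delta_\rho')-(\Sigma_\rho-\Delta_\rho)(\Sigma_\rho'+\Delta_\rho')\bigr)
 =\tfrac12\bigl(\Delta_\rho\Sigma_\rho'-\Sigma_\rho\Delta_\rho'\bigr).
\end{equation*}
By \eqref{eq:equal-nonzero-radii}, $\Sigma_\rho(0)=2\rho_+(0)\ne0$, so \eqref{eq:finite-CSS-condition} holds and Proposition~\ref{prop:CSS-envelope} and Theorem~\ref{thm:CSS-classification} apply. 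On a neighbourhood where $\Sigma_\rho\ne0$ we then have $\mathcal J_\rho=-\tfrac12\Sigma_\rho^2\,(\Delta_\rho/\Sigma_\rho)'$.

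Now suppose $\Delta_\rho(u)=u^m h(u)$ with $h(0)\ne0$. Then $\Delta_\rho/\Sigma_\rho=u^m\tilde h(u)$, where $\tilde h=h/\Sigma_\rho$ and $\tilde h(0)\ne0$. Its derivative is $u^{m-1}\bigl(m\tilde h+u\tilde h'\bigr)$, and the bracket equals $m\tilde h(0)\ne0$ at $u=0$ because $m\ge1$. Multiplying by the unit $-\tfrac12\Sigma_\rho^2$ shows that $\mathcal J_\rho$ has a zero of order exactly $m-1$. Here ``order'' is read in the finite-jet sense, so this is just a statement about Taylor coefficients and needs no analyticity. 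One can check the leading coefficient explicitly: $\mathcal J_\rho^{(m-1)}(0)=-\tfrac12\Sigma_\rho(0)\Delta_\rho^{(m)}(0)$.

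For \eqref{eq:W-equals-S}, note that $\rho_-(0)/\Sigma_\rho(0)=1/2$. Then \eqref{eq:CSS-formula} gives $S(0)=\gamma_-(0)+\tfrac12\Delta_\gamma(0)$, which is the midpoint, that is, $\E_{1/2}(0)$ by \eqref{eq:Wigner-definition}.

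For the consequences, read off the order of $\Delta_\rho$ from Theorem~\ref{thm:Wigner-difference}. The case $m=1$ is (ii), $m=2$ is (iii), and $m=3$ is (iv)--(v). Note that every $5/4$-type in (v) requires exactly $\Delta_\rho'''(0)\ne0$. Then apply Theorem~\ref{thm:CSS-classification} with $\mathcal J_\rho$ of order $0$, $1$ and $2$: $S$ is regular, a $3/2$-cusp, and a $4/3$-cusp, respectively. Hypothesis \eqref{eq:chord-transverse} is given. There is no real obstacle; the only care needed is the algebraic identity for $\mathcal J_\rho$ and confirming that the order of the zero drops by exactly one.
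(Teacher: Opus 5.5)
Your proof is correct and is essentially the paper's argument. The paper uses $\rho_-=\rho_+-\Delta_\rho$ to get $\mathcal J_\rho=-\rho_+\Delta_\rho'+\rho_+'\Delta_\rho$ and compares the orders of the two terms, which is the same Wronskian identity as your $\mathcal J_\rho=\tfrac12(\Delta_\rho\Sigma_\rho'-\Sigma_\rho\Delta_\rho')=-\tfrac12\Sigma_\rho^2(\Delta_\rho/\Sigma_\rho)'$, with $\rho_+$ playing the role of $\Sigma_\rho/2$; the midpoint identity from $\rho_-(0)/\Sigma_\rho(0)=1/2$ and the read-off via Theorems~\ref{thm:Wigner-difference} and~\ref{thm:CSS-classification} are likewise the same.
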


\begin{proof}
Since $\rho_-=\rho_+-\Delta_\rho$,
\begin{equation}\label{eq:J-Delta}
 \mathcal J_\rho
 =-\rho_+\Delta_\rho'+\rho_+'\Delta_\rho.
\end{equation}
If $\ord_0\Delta_\rho=m$, the~first term on the~right-hand side of \eqref{eq:J-Delta} has order $m-1$ and nonzero leading coefficient, whereas the~second has order at least $m$.  Thus $\ord_0\mathcal J_\rho=m-1$.  Under \eqref{eq:equal-nonzero-radii}, the~quotient $\rho_-(0)/\Sigma_\rho(0)$ equals $1/2$, which proves \eqref{eq:W-equals-S}.  The~classification then follows from Theorems~\ref{thm:Wigner-difference} and \ref{thm:CSS-classification}.
\end{proof}

There is an~analogous lowering rule when only one endpoint of the~chord is a~singular point of a~front.

\begin{corollary}\label{cor:source-cusp-lowering}
Assume \eqref{eq:chord-transverse}, $\rho_+(0)=0$, and $\rho_-(0)\ne0$.  If $\rho_+$ has a~zero of order $m$, then
\begin{equation*}
 S(0)=\gamma_+(0),
 \qquad \ord_0\mathcal J_\rho=m-1.
\end{equation*}
In particular, a~$3/2$-cusp of $\gamma_+$ gives a~regular point of $S$, a~$4/3$-cusp gives a~$3/2$-cusp of $S$, and a~front-type $5/4$- or $(5/4;\mathord\pm7)$-cusp gives a~$4/3$-cusp of $S$.
\end{corollary}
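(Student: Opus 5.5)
Since $\rho_+(0)=0$ and $\rho_-(0)\ne0$, we have $\Sigma_\rho(0)=\rho_-(0)\ne0$. Hence \eqref{eq:finite-CSS-condition} holds, and together with \eqref{eq:chord-transverse} Proposition~\ref{prop:CSS-envelope} and Theorem~\ref{thm:CSS-classification} both apply.

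The location of $S(0)$ comes straight from \eqref{eq:CSS-formula}. At $u=0$ the coefficient $\rho_-(0)/\Sigma_\rho(0)$ equals $\rho_-(0)/\rho_-(0)=1$. Therefore $S(0)=\gamma_-(0)+\Delta_\gamma(0)=\gamma_+(0)$.

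For the order of $\mathcal J_\rho=\rho_+\rho_-'-\rho_-\rho_+'$ we compare orders term by term, as in the proof of Theorem~\ref{thm:order-lowering}. Write $\rho_+(u)=c\,u^m+O(u^{m+1})$ with $c\ne0$ and $m\ge1$. The term $\rho_-\rho_+'$ equals $\rho_-(0)\,m c\,u^{m-1}+O(u^m)$, so it has order exactly $m-1$ with nonzero leading coefficient. The term $\rho_+\rho_-'$ has order at least $m$. Hence $\ord_0\mathcal J_\rho=m-1$. If a more symmetric argument is preferred, one can instead use $\mathcal J_\rho=-\rho_-^2(\rho_+/\rho_-)'$ from the proof of Theorem~\ref{thm:CSS-classification}. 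Since $\rho_-^2$ is a unit, $\ord_0\mathcal J_\rho=\ord_0(\rho_+/\rho_-)-1=m-1$.

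The consequences follow by combining two earlier results. By Remark~\ref{rem:angular-radius} and Lemma~\ref{lem:cusp-criteria}(i),(ii),(v), the angularly regular front-type cusps of $\gamma_+$ correspond to $m=1,2,3$:
\begin{itemize}
\item the $3/2$-cusp has $m=1$;
\item the $4/3$-cusp has $m=2$;
\item the $5/4$- and $(5/4;\pm7)$-cusps have $m=3$.
\end{itemize}
Theorem~\ref{thm:CSS-classification} then sends $\ord_0\mathcal J_\rho=0,1,2$ to a regular point, a $3/2$-cusp and a $4/3$-cusp of $S$, respectively.

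The only point needing care is the $m=3$ case. There, Lemma~\ref{lem:cusp-criteria}(v) is stated only up to $C^1$-equivalence. So we note the implication we actually use: each of the three $5/4$-types satisfies \eqref{eq:five-four-C1-criterion}, which forces $m=3$. The classification of $S$ uses only $\ord_0\mathcal J_\rho=2$, so the subtype of the cusp of $\gamma_+$ plays no role.
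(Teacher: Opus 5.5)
Your proof is correct and follows the paper's argument essentially verbatim. It obtains $S(0)=\gamma_+(0)$ from $\rho_-(0)/\Sigma_\rho(0)=1$, counts orders of the two terms of $\mathcal J_\rho$, and concludes via Remark~\ref{rem:angular-radius} and Theorem~\ref{thm:CSS-classification}; your explicit check that $\Sigma_\rho(0)=\rho_-(0)\ne0$ and your note on the $C^1$ subtlety in the $m=3$ case are useful details that the paper leaves implicit.
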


\begin{proof}
Formula \eqref{eq:CSS-formula} and $\rho_-(0)/\Sigma_\rho(0)=1$ give $S(0)=\gamma_+(0)$.  The~term $-\rho_-\rho_+'$ in $\mathcal J_\rho$ has order $m-1$ and nonzero leading coefficient, whereas $\rho_+\rho_-'$ has order at least $m$. Remark~\ref{rem:angular-radius} and Theorem~\ref{thm:CSS-classification} complete the~proof.
\end{proof}

We now globalise the~preceding relation for an~oval.  In the~ordinary-cusp case the~result is the~inequality proved in \cite[Theorem~7.2]{DomitrzRiosGCS}. The~finite-order formulation below also covers degenerate cusps.

\begin{theorem}\label{thm:oval-count}
Let $\gamma$ be a~smooth strictly convex oval, parametrised by its tangent angle $u\in\R/2\pi\mathbb Z$,
\begin{equation}\label{eq:oval-radius}
 \gamma'(u)=\rho(u)\mathbbm{e}(u),\qquad \rho(u)>0.
\end{equation}
Define the~anti-periodic function
\begin{equation}\label{eq:oval-log-ratio}
 F(u)=\log\frac{\rho(u)}{\rho(u+\pi)},
 \qquad F(u+\pi)=-F(u).
\end{equation}
Assume that $F$ and $F'$ have only finitely many zeros and that every zero has finite order.  Their zero sets and their orders descend to $\R/\pi\mathbb Z$. Put
\begin{align*}
 \mathcal Z_{\E_{1/2}}
 &=\{[u]\in\R/\pi\mathbb Z:F(u)=0\},&
 \mathcal Z_{\CSS}
 &=\{[u]\in\R/\pi\mathbb Z:F'(u)=0\},\\
 \mathcal O_{\E_{1/2}}
 &=\sum_{[u]\in\mathcal Z_{\E_{1/2}}}\ord_uF,&
 \mathcal O_{\CSS}
 &=\sum_{[u]\in\mathcal Z_{\CSS}}\ord_uF'.
\end{align*}
Then
\begin{equation}\label{eq:oval-count-inequalities}
 \#\mathcal Z_{\CSS}\geq\#\mathcal Z_{\E_{1/2}},
 \qquad \mathcal O_{\CSS}\geq\mathcal O_{\E_{1/2}},
 \qquad
 \mathcal O_{\E_{1/2}}\equiv\mathcal O_{\CSS}\equiv1\pmod 2.
\end{equation}
Moreover, $\mathcal Z_{\E_{1/2}}$ and $\mathcal Z_{\CSS}$ are precisely the~singular-parameter sets of the~Wigner caustic and the~centre symmetry set, respectively, and the~orders occurring in $\mathcal O_{\E_{1/2}}$ and $\mathcal O_{\CSS}$ are the~orders of their respective signed speeds.
\end{theorem}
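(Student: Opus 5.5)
We reduce everything to the one-variable function $F$ on the circle $\R/2\pi\mathbb Z$, using the parallel pair $\gamma_+(u)=\gamma(u)$, $\gamma_-(u)=\gamma(u+\pi)$.

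\emph{Identification with the local theory.} Since $\mathbbm{e}(u+\pi)=-\mathbbm{e}(u)$, we have $\gamma_-'(u)=-\rho(u+\pi)\mathbbm{e}(u)$. This is exactly the normal form \eqref{eq:common-angle} with $\rho_+(u)=\rho(u)$ and $\rho_-(u)=\rho(u+\pi)$, both positive. Strict convexity gives $\Delta_\gamma\ne0$ and the transversality \eqref{eq:chord-transverse}, because distinct points of an oval do not share a tangent line. Positivity gives $\Sigma_\rho>0$, so \eqref{eq:finite-CSS-condition} holds everywhere.

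Now $\Delta_\rho=\rho_-(e^{F}-1)$ with $\rho_->0$. Hence $\Delta_\rho$ and $F$ have the same zeros with the same orders. Similarly
\[
\mathcal J_\rho=\rho_+\rho_-'-\rho_-\rho_+'=-\rho_+\rho_-F',
\]
so $\mathcal J_\rho$ and $F'$ have the same zeros with the same orders. By Theorems~\ref{thm:Wigner-difference} and~\ref{thm:CSS-classification}, in particular formulas \eqref{eq:Wigner-derivative} and \eqref{eq:CSS-signed-speed}, the signed speeds of $\E_{1/2}$ and $S$ are nonvanishing multiples of $\Delta_\rho$ and $\mathcal J_\rho$. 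This proves the final assertion.

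\emph{Descent to $\R/\pi\mathbb Z$.} Anti-periodicity gives $F(u+\pi)=-F(u)$ and $F'(u+\pi)=-F'(u)$. So zero sets and orders are $\pi$-invariant and descend to $\R/\pi\mathbb Z$. Geometrically, $u$ and $u+\pi$ give the same chord and the same points of both constructions.

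\emph{Counting.} Work on a fundamental interval $[a,a+\pi]$, with $a$ chosen so that $F(a)\ne0$ and $F'(a)\ne0$. Then $F(a+\pi)=-F(a)$ has the opposite sign, so $F$ changes sign an odd number of times on $(a,a+\pi)$.

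Sign changes occur exactly at the zeros of odd order. Since zeros of even order contribute even amounts, $\mathcal O_{\E_{1/2}}\equiv1\pmod 2$. The same argument applied to $F'$, which is also anti-periodic with $F'(a)\ne0$, gives $\mathcal O_{\CSS}$ odd.

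For the inequalities, list the zeros of $F$ in $(a,a+\pi)$ as $z_1<\dots<z_k$ with orders $m_i$.
\begin{itemize}
\item Each zero $z_i$ is a zero of $F'$ of order $m_i-1$.
\item Rolle gives a zero of $F'$ strictly between consecutive zeros, for $i=1,\dots,k-1$.
\item One more zero comes from the wrap-around interval between $z_k$ and $z_1+\pi$. There $F$ vanishes at both ends, since $F(z_1+\pi)=-F(z_1)=0$. That interval lies in $(z_k,z_1+\pi)$, whose class in $\R/\pi\mathbb Z$ meets no previously counted point.
\end{itemize}
So we obtain at least $k$ further zeros of $F'$, each of order $\ge1$, distinct from the $z_i$ mod $\pi$.

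This yields $\mathcal O_{\CSS}\ge\sum_i(m_i-1)+k=\mathcal O_{\E_{1/2}}$. For the point count, the Rolle points alone give at least $k$ distinct classes in $\mathcal Z_{\CSS}$, so $\#\mathcal Z_{\CSS}\ge k=\#\mathcal Z_{\E_{1/2}}$.

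The case $k=0$ cannot occur, since the number of sign changes of $F$ is odd. The case $k=1$ is handled by the wrap-around interval alone, from $z_1$ to $z_1+\pi$.

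\emph{Main obstacle.} The main obstacle is the wrap-around bookkeeping. We must check that the extra Rolle zero is distinct modulo $\pi$ from all the counted zeros. We also need to handle the endpoint choice of $a$, which we make generic using finiteness of the zero sets.
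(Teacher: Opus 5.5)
Your proposal is correct and follows the paper's proof essentially step by step. It uses the same identities $\Delta_\rho=\rho_-(e^{F}-1)$ and $\mathcal J_\rho=-\rho_+\rho_-F'$, the same Rolle count (order $m_i-1$ at each $z_i$ plus one new zero per gap), and the same sign-change parity argument over a half-period from a base point where neither $F$ nor $F'$ vanishes. The only difference is bookkeeping: you work directly on a half-period with the wrap-around interval $(z_k,z_1+\pi)$, whereas the paper counts cyclically on $\R/2\pi\mathbb Z$ and then halves using the anti-periodic pairing of zeros.
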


\begin{proof}
For the~parallel pair $\gamma(u),\gamma(u+\pi)$, set $\rho_+(u)=\rho(u)$ and $\rho_-(u)=\rho(u+\pi)$.  Since both radii are positive, $\Sigma_\rho>0$. Strict convexity also makes the~chord transverse to the~common tangent.  Furthermore,
\begin{equation}\label{eq:F-identities}
 \Delta_\rho
 =\rho(u+\pi)\bigl(\exp(F(u))-1\bigr),
 \qquad
 F'(u)=-\frac{\mathcal J_\rho(u)}
 {\rho(u)\rho(u+\pi)}.
\end{equation}
Thus $F$ has the~same zero set and the~same zero orders as $\Delta_\rho$, while $F'$ has the~same zero set and zero orders as $\mathcal J_\rho$.  Theorems~\ref{thm:Wigner-difference} and \ref{thm:CSS-classification} give the~last assertion.

It remains to prove \eqref{eq:oval-count-inequalities}.  Work first on $\R/2\pi\mathbb Z$ and list the~distinct zeros of $F$ cyclically.  Rolle's theorem supplies a~zero of $F'$ in every open interval between consecutive zeros of $F$ -- these zeros are distinct.  Hence the~number of zeros of $F'$ is at least the~number of zeros of $F$.  If a~zero of $F$ has order $m$, it contributes a~zero of $F'$ of order $m-1$ when $m\geq2$, while the~Rolle zero in the~following interval contributes at least one.  After summing cyclically, the~total zero multiplicity of $F'$ is therefore at least the~total zero multiplicity of $F$.  Anti-periodicity pairs every zero with a~zero of the~same order at distance $\pi$.  Dividing both inequalities by two proves the~first two claims on $\R/\pi\mathbb Z$.

Finally, choose a~base point which is not a~zero of either $F$ or $F'$. Each of these anti-periodic functions changes sign over the~following half-period.  A~finite-order zero changes the~sign precisely when its order is odd.  Therefore the~sum of the~zero orders of each function in a half-period is odd, which proves the~two congruences.
\end{proof}

\begin{corollary}\label{cor:oval-lower-bound}
Under the~hypotheses of Theorem~\ref{thm:oval-count},
\begin{equation*}
 \#\mathcal Z_{\CSS}\geq\#\mathcal Z_{\E_{1/2}}\geq3.
\end{equation*}
\end{corollary}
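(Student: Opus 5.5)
The plan is to combine Theorem~\ref{thm:oval-count} with a lower bound for the number of distinct zeros of the anti-periodic function $F$ on $\R/\pi\mathbb Z$. The first inequality $\#\mathcal Z_{\CSS}\geq\#\mathcal Z_{\E_{1/2}}$ is already part of \eqref{eq:oval-count-inequalities}. It therefore remains to show that $F$ has at least three distinct zeros in a half-period, that is, at least six on $\R/2\pi\mathbb Z$.

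First I would show that $F$ is $L^2$-orthogonal to $1$, $\cos u$ and $\sin u$ on $\R/2\pi\mathbb Z$. Orthogonality to $1$ follows from anti-periodicity, since $\int_0^{2\pi}F=0$. For the first harmonics, anti-periodicity alone is not enough, because $\cos u$ is itself anti-periodic. The key input is closedness of the oval, which gives $\int_0^{2\pi}\rho(u)e^{iu}\dd u=0$. Splitting the integral into $[0,\pi]$ and $[\pi,2\pi]$ yields $\int_0^{\pi}\bigl(\rho(u)-\rho(u+\pi)\bigr)e^{iu}\dd u=0$. So the function $\Delta_\rho=\rho(u)-\rho(u+\pi)$, which is anti-periodic, is orthogonal to $\cos u$ and $\sin u$. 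By \eqref{eq:F-identities}, $\Delta_\rho=\rho(u+\pi)\bigl(e^{F}-1\bigr)$, so $F$ and $\Delta_\rho$ have the same sign pattern. I would therefore run the zero-counting argument with $\Delta_\rho$ rather than $F$; the zero sets coincide, which is all the count needs.

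Next comes the standard Sturm--Hurwitz/Rolle-type argument. Suppose $\Delta_\rho$ changes sign at only two points of $\R/2\pi\mathbb Z$. By anti-periodicity these are $a$ and $a+\pi$, since the number of sign changes on the circle is even and they come in antipodal pairs. Zeros of even order do not change sign, so they can be ignored in this step. Then $\sin(u-a)$ has the same sign pattern, possibly after multiplying by $-1$. Hence $\int\Delta_\rho(u)\sin(u-a)\dd u\ne0$, because the integrand is of one sign and not identically zero; this uses that $F$ has only finitely many zeros. This contradicts the orthogonality. So the number of sign changes on $\R/2\pi\mathbb Z$ is at least four. It is also $\equiv2\pmod 4$, because the sign changes pair antipodally and their count on a half-period is odd by the parity argument in Theorem~\ref{thm:oval-count}. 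Hence it is at least six, which gives at least three distinct zeros of $F$ in $\R/\pi\mathbb Z$. One could also note directly that zero sign changes is impossible, since an anti-periodic function must change sign.

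The step I expect to be the main obstacle is justifying the orthogonality to the first harmonics through the closing condition. In particular, one must make sure the splitting at $\pi$ yields exactly $\Delta_\rho$ with the correct sign and the correct identification $\mathbbm{e}(u+\pi)=-\mathbbm{e}(u)$. The rest is the classical four-vertex-style sign-change count, adapted to anti-periodic functions.
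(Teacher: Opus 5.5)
Your proof is correct, but it takes a different route from the paper's. The paper gets the second inequality in one line by citing the Blaschke--S\"uss theorem: an oval has at least three antipodal pairs (parallel tangents, equal curvatures), and these are exactly the classes in $\mathcal Z_{\E_{1/2}}$.

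You instead prove that bound directly. The closing condition $\int_0^{2\pi}\rho(u)e^{iu}\dd u=0$, split at $\pi$ using $\mathbbm{e}(u+\pi)=-\mathbbm{e}(u)$, makes the anti-periodic function $\Delta_\rho$ orthogonal to $\cos u$ and $\sin u$. You then run a Sturm--Hurwitz count adapted to anti-periodicity:
\begin{itemize}
\item sign changes come in antipodal pairs, with an odd number per half-period, so their total is $\equiv 2\pmod 4$;
\item the total $2$ is excluded by testing against $\sin(u-a)$.
\end{itemize}
This gives at least six sign changes on $\R/2\pi\mathbb Z$. It is essentially the classical proof of Blaschke--S\"uss itself, so your version is self-contained at the cost of length.

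Your route also gives slightly more. You get at least three classes in $\R/\pi\mathbb Z$ where $F$ changes sign, i.e.\ three Wigner singularities whose signed speed vanishes to odd order. The counting step itself needs only finiteness of the zero set. The paper's version is shorter and ties the bound to the classical literature.

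One correction: your opening sentence says that $F$ itself is orthogonal to $\cos u$ and $\sin u$. This is false in general, because the closing condition is linear in $\rho$ while $F$ is not. For example, for $\rho(u)=1+\delta\cos 2u+\epsilon\cos 3u$ with small $\delta\epsilon\ne0$, the $\cos u$-coefficient of $F$ is $-\epsilon\delta$ to leading order. You run the whole count with $\Delta_\rho$, which has the same zeros and sign pattern as $F$ by \eqref{eq:F-identities}, so the argument is unaffected. Just drop that claim about $F$.
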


\begin{proof}
The~Blaschke--S\"uss theorem (see, for instance, \cite{Gozdz,Laugwitz}) gives at least three antipodal pairs on an oval. We recall that a pair of points on an oval is an \emph{antipodal pair} if the tangent lines are parallel at these points and curvatures are equal. These are precisely the~classes in $\mathcal Z_{\E_{1/2}}$.  See \cite{DomitrzZwierSingular} for a~modern account and generalisations. The~first inequality is Theorem~\ref{thm:oval-count}.
\end{proof}

\begin{remark}
At a~zero of $F$ of order $m$, identity \eqref{eq:F-identities} recovers the~local order drop $m\mapsto m-1$ at the~same midpoint, while Rolle's theorem detects the~additional $\CSS$ singularities between consecutive Wigner singularities.  In particular, a~Wigner $4/3$-cusp forces a~CSS $3/2$-cusp at the~same point, and a~Wigner $5/4$-type cusp forces a~CSS $4/3$-cusp there.  When all zeros of $F$ and $F'$ are simple, \eqref{eq:oval-count-inequalities} reduces to the~classical ordinary-cusp inequality.
\end{remark}

\section{Projective completion of the~centre symmetry set}
\label{sec:projective-CSS}

\noindent The~condition $\Sigma_\rho\ne0$ is automatic for a~strictly convex curve with coherent orientation, but not for a~general frontal.  The~radius formula has a~natural projective completion.  We write homogeneous points of $\RP^2$ as $[x:w]$, where $x\in\R^2$ and $w\in\R$.

\begin{proposition}\label{prop:projective-CSS}
The~map
\begin{equation}\label{eq:projective-CSS}
 \widetilde S(u)=
 [\rho_-(u)\gamma_+(u)+\rho_+(u)\gamma_-(u):\Sigma_\rho(u)]
\end{equation}
agrees with the~affine centre symmetry set whenever $\Sigma_\rho\ne0$.  Suppose $\Delta_\gamma(0)\ne0$.
\begin{enumerate}[(i)]
\item If $\Sigma_\rho(0)=0$ and $\rho_+(0)\rho_-(0)\ne0$, then $\widetilde S(0)=[\Delta_\gamma(0):0]$, up to a~nonzero homogeneous factor.  Thus the~$\CSS$ point is at infinity in the~direction of the~chord.
\item Suppose $\rho_+$ and $\rho_-$ have finite positive orders $p$ and $q$, respectively. After cancelling their common factor of order $\min\{p,q\}$, \eqref{eq:projective-CSS} extends to $u=0$.  If $p<q$, the~limit is $[\gamma_-(0):1]$. If $q<p$, it is $[\gamma_+(0):1]$.
\item If $p=q$ and
      \begin{equation*}
      \rho_+(u)=a_+u^p+O(u^{p+1}),\qquad
      \rho_-(u)=a_-u^p+O(u^{p+1}),
      \end{equation*}
with $a_+a_-\ne0$, then the~limit is
      \begin{equation*}
      \left[
      \frac{a_-\gamma_+(0)+a_+\gamma_-(0)}{a_++a_-}:1
      \right]
      \end{equation*}
when $a_++a_-\ne0$, and is $[\Delta_\gamma(0):0]$ when $a_++a_-=0$.
\end{enumerate}
\end{proposition}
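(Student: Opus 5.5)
The plan is to work directly with homogeneous coordinates, since all assertions are about limits in $\RP^2$ of the vector $(\rho_-\gamma_++\rho_+\gamma_-,\Sigma_\rho)\in\R^3$. First I check agreement with the affine formula. When $\Sigma_\rho\ne0$, dividing by $\Sigma_\rho$ and using $\gamma_+=\gamma_-+\Delta_\gamma$ gives
\begin{equation*}
 \frac{\rho_-\gamma_++\rho_+\gamma_-}{\Sigma_\rho}
 =\frac{(\rho_-+\rho_+)\gamma_-+\rho_-\Delta_\gamma}{\Sigma_\rho}
 =\gamma_-+\frac{\rho_-}{\Sigma_\rho}\Delta_\gamma ,
\end{equation*}
which is \eqref{eq:CSS-formula} from Proposition~\ref{prop:CSS-envelope}. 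Well-definedness as a point of $\RP^2$ only needs the homogeneous vector to be nonzero, and that is automatic when $\Sigma_\rho\ne0$.

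For (i), I put $\rho_-(0)=-\rho_+(0)=:-r\ne0$. The spatial part at $0$ is then $-r\gamma_+(0)+r\gamma_-(0)=-r\Delta_\gamma(0)$. This is nonzero because $\Delta_\gamma(0)\ne0$, and the last coordinate vanishes. Hence $\widetilde S(0)=[\Delta_\gamma(0):0]$, which is a point at infinity in the chord direction.

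For (ii) and (iii), I write $\rho_\pm=u^{p_\pm}\hat\rho_\pm$ with $\hat\rho_\pm(0)\ne0$, using Hadamard's lemma (finite order gives smooth quotients). Let $k=\min\{p,q\}$. Dividing the homogeneous vector by $u^k$ is allowed for $u\ne0$ and gives the smooth map
\begin{equation*}
 u\mapsto\bigl(u^{q-k}\hat\rho_-\gamma_++u^{p-k}\hat\rho_+\gamma_-,\;u^{p-k}\hat\rho_++u^{q-k}\hat\rho_-\bigr).
\end{equation*}
\begin{itemize}
\item If $p<q$, the value at $0$ is $(\hat\rho_+(0)\gamma_-(0),\hat\rho_+(0))$, which is $[\gamma_-(0):1]$.
\item If $q<p$, the symmetric computation gives $[\gamma_+(0):1]$.
\item If $p=q$, we have $\hat\rho_\pm(0)=a_\pm$. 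The value at $0$ is $(a_-\gamma_+(0)+a_+\gamma_-(0),\,a_++a_-)$. When $a_++a_-\ne0$, dividing gives the stated weighted point. When $a_++a_-=0$, the spatial part equals $a_-\Delta_\gamma(0)\ne0$, giving $[\Delta_\gamma(0):0]$.
\end{itemize}

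The only point needing care is that in each case the reduced homogeneous vector is nonzero at $0$. Otherwise the extension would not define a point of $\RP^2$. I expect this to be the main (though mild) obstacle, and I will handle it by noting explicitly that either the last coordinate is nonzero, or $a_-\ne0$ together with $\Delta_\gamma(0)\ne0$ makes the spatial part nonzero.
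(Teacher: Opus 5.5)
Your proposal is correct and follows essentially the same route as the paper: homogenise the affine formula, read off (i) directly at $u=0$, and for (ii)--(iii) divide the homogeneous coordinates by $u^{\min\{p,q\}}$ and evaluate. Your explicit factorisation $\rho_\pm=u^{p_\pm}\hat\rho_\pm$ and your check that the reduced homogeneous vector is nonzero at $0$ spell out what the paper's proof leaves implicit.
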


\begin{proof}
The~first assertion follows by homogenising \eqref{eq:CSS-formula}.  If $\Sigma_\rho(0)=0$ and both radii are nonzero, then $\rho_-(0)=-\rho_+(0)$ and the~first homogeneous component is a~nonzero multiple of $\gamma_-(0)-\gamma_+(0)=-\Delta_\gamma(0)$.  This proves (i).

For (ii)--(iii), divide all three homogeneous coordinates by $u^{\min\{p,q\}}$.  If $p<q$, the~reduced value of $\rho_-$ vanishes and that of $\rho_+$ does not, so the~limit is $[\gamma_-(0):1]$.  The~case $q<p$ is symmetric.  If $p=q$, the~reduced homogeneous point is
\begin{equation*}
 [a_-\gamma_+(0)+a_+\gamma_-(0):a_++a_-].
\end{equation*}
This is the~stated affine point when $a_++a_-\ne0$.  When $a_++a_-=0$, its first component is a~nonzero multiple of $\Delta_\gamma(0)$.
\end{proof}

\begin{remark}\label{rem:ratio-map}
The~map $[\rho_+:\rho_-]$ simultaneously records four distinguished behaviours:
\begin{equation*}
 [\rho_+:\rho_-]=[1:1] \Longleftrightarrow \E_{1/2}'=0,
 \qquad
 [\rho_+:\rho_-]=[1:-1]
 \Longleftrightarrow \widetilde S\text{ is at infinity},
\end{equation*}
while $[\rho_+:\rho_-]=[0:1]$ and $[1:0]$ place the~affine $\CSS$ point at one of the~two chord endpoints.  Its critical points are precisely the~singular points of the~finite admissible $\CSS$ branch.  The~undefined value $[0:0]$ is resolved by Proposition~\ref{prop:projective-CSS} whenever the~two zeros have finite order.
\end{remark}

\section{Parallel pairs containing a~non-front singularity}

\noindent We now drop angular regularity at the~first endpoint.  Let $\gamma_+$ be a~cooriented frontal germ with Legendre data $(\ell,\theta)$ from \eqref{eq:Legendre-data}.  Let $\gamma_-$ be an~angularly regular cooriented frontal germ parametrised by its tangent angle $u$, and denote its chosen tangent lift by $\mathbbm{e}_-$.  We choose its sign so that $\mathbbm{e}_-(\theta(t))=-\mathbbm{e}(t)$ at corresponding points.  If $\rho_-$ denotes the~extended signed radius of curvature of $\gamma_-$, defined as in \eqref{eq:extended-radius} relative to $\mathbbm{e}_-$, then
\begin{equation}\label{eq:nonfront-minus-branch}
 \gamma_-'(\theta(t))
 =\rho_-(\theta(t))\mathbbm{e}_-(\theta(t))
 =-\rho_-(\theta(t))\mathbbm{e}(t).
\end{equation}

The~corresponding local family of parallel pairs is $t\mapsto\bigl(\gamma_+(t),\gamma_-(\theta(t))\bigr)$, because the~chosen tangent lifts at these two points are opposite.  Put
\begin{equation}\label{eq:degenerate-chord}
 \Delta_\gamma(t)=\gamma_+(t)-\gamma_-(\theta(t)),
 \qquad
 \rho_{-,0}=\rho_-(\theta(0)).
\end{equation}
Thus $\rho_{-,0}$ is the~extended signed radius of curvature of $\gamma_-$ at the~second endpoint of the~base parallel pair.  Notice that $\gamma_-$ is regular at this endpoint if and only if $\rho_{-,0}\ne0$.  We assume
\begin{equation}\label{eq:degenerate-transversality}
 \Delta_\gamma(0)\ne0,\qquad
 \det(\Delta_\gamma(0),\mathbbm{e}(0))\ne0.
\end{equation}

\begin{proposition}\label{prop:degenerate-master}
The~Wigner caustic
\begin{equation*}
 \E_{1/2}(t)=\frac{\gamma_+(t)+\gamma_-(\theta(t))}{2}
\end{equation*}
is a~frontal with the~same tangent field $\mathbbm{e}$ and signed speed
\begin{equation}\label{eq:degenerate-Wigner-speed}
 \ell_{\E_{1/2}}(t)
 =\frac12\bigl(\ell(t)-\rho_-(\theta(t))\theta'(t)\bigr).
\end{equation}
For the~chord family from $\gamma_-(\theta(t))$ to $\gamma_+(t)$, the~finite centre symmetry set, where defined, is
\begin{equation}\label{eq:degenerate-CSS}
 S(t)=\gamma_-(\theta(t))+\Lambda(t)\Delta_\gamma(t),
 \qquad
 \Lambda(t)=
 \frac{\rho_-(\theta(t))\theta'(t)}
 {\ell(t)+\rho_-(\theta(t))\theta'(t)}.
\end{equation}
It satisfies $S'=\Lambda'\Delta_\gamma$.  Its projective extension is
\begin{equation}\label{eq:degenerate-projective-CSS}
 \widetilde S(t)=
 [\rho_-(\theta(t))\theta'(t)\gamma_+(t)
  +\ell(t)\gamma_-(\theta(t)):
  \ell(t)+\rho_-(\theta(t))\theta'(t)].
\end{equation}
\end{proposition}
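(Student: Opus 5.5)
The plan is to repeat the computations behind Theorem~\ref{thm:Wigner-difference} and Proposition~\ref{prop:CSS-envelope}, now using $t$ as the parameter instead of a common angle. The only new ingredient is the chain rule applied to \eqref{eq:nonfront-minus-branch}:
\begin{equation*}
 \frac{\dd}{\dd t}\gamma_-(\theta(t))=-\rho_-(\theta(t))\theta'(t)\mathbbm{e}(t).
\end{equation*}
Together with $\gamma_+'=\ell\mathbbm{e}$, this gives
\begin{equation*}
 \E_{1/2}'(t)=\tfrac12\bigl(\ell(t)-\rho_-(\theta(t))\theta'(t)\bigr)\mathbbm{e}(t),
\end{equation*}
which is \eqref{eq:degenerate-Wigner-speed}. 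The field $\mathbbm{n}(t)$ is smooth, has unit length and is orthogonal to $\E_{1/2}'$. Hence $\E_{1/2}$ is a cooriented frontal with tangent field $\mathbbm{e}$ and signed speed $\ell_{\E_{1/2}}$. We claim only that it is a frontal, not a front, since $\theta'(0)$ may vanish.

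For the centre symmetry set, the same chain rule gives
\begin{equation*}
 \Delta_\gamma'(t)=\bigl(\ell(t)+\rho_-(\theta(t))\theta'(t)\bigr)\mathbbm{e}(t).
\end{equation*}
Set $L(t,\lambda)=\gamma_-(\theta(t))+\lambda\Delta_\gamma(t)$. Then $\partial_\lambda L=\Delta_\gamma$ and
\begin{equation*}
 \partial_tL=\bigl(-\rho_-\theta'+\lambda(\ell+\rho_-\theta')\bigr)\mathbbm{e}.
\end{equation*}
By the transversality assumption \eqref{eq:degenerate-transversality}, $\det(\Delta_\gamma,\mathbbm{e})\ne0$ near $0$. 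So $(t,\lambda)$ is a critical point of $L$ exactly when the scalar coefficient of $\mathbbm{e}$ vanishes. Wherever $\ell+\rho_-\theta'\ne0$, this gives $\lambda=\Lambda(t)$, which is \eqref{eq:degenerate-CSS}. This is the meaning of ``where defined'': when the denominator vanishes but the numerator does not, there is no finite critical value.

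Differentiating $S=\gamma_-(\theta)+\Lambda\Delta_\gamma$ gives
\begin{equation*}
 S'=-\rho_-\theta'\mathbbm{e}+\Lambda'\Delta_\gamma+\Lambda(\ell+\rho_-\theta')\mathbbm{e}.
\end{equation*}
The first and third terms cancel by the definition of $\Lambda$, leaving $S'=\Lambda'\Delta_\gamma$.

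For the projective extension, multiply $S$ by the denominator $\ell+\rho_-\theta'$ and expand $\Delta_\gamma=\gamma_+-\gamma_-(\theta)$. The homogeneous first coordinate is
\begin{equation*}
 (\ell+\rho_-\theta')\gamma_-(\theta)+\rho_-\theta'(\gamma_+-\gamma_-(\theta))=\rho_-\theta'\gamma_++\ell\gamma_-(\theta),
\end{equation*}
which gives \eqref{eq:degenerate-projective-CSS}. This agrees with $S$ where the denominator is nonzero, exactly as in Proposition~\ref{prop:projective-CSS}.

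There is no serious obstacle here. The one point needing care is the sign convention $\mathbbm{e}_-(\theta(t))=-\mathbbm{e}(t)$. It must be applied consistently in both the Wigner speed, where the two terms subtract, and the chord derivative, where they add, and the frontal (rather than front) conclusion must not be overstated.
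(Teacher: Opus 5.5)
Your proposal is correct and follows the paper's proof essentially step for step. The chain rule applied to \eqref{eq:nonfront-minus-branch} gives the Wigner speed and $\Delta_\gamma'=(\ell+\rho_-(\theta)\theta')\mathbbm{e}$; the envelope condition is then solved exactly as in Proposition~\ref{prop:CSS-envelope}, the $\mathbbm{e}$-terms cancel in $S'$, and homogenising yields $\widetilde S$. Your extra remarks — that the unit normal $\mathbbm{n}$ shows $\E_{1/2}$ is a frontal, and what ``where defined'' means — are accurate, and they spell out steps the paper leaves implicit.
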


\begin{proof}
Differentiating the~Wigner midpoint and using \eqref{eq:nonfront-minus-branch} gives \eqref{eq:degenerate-Wigner-speed}.  Moreover,
\begin{equation*}
 \Delta_\gamma'(t)=
 \bigl(\ell(t)+\rho_-(\theta(t))\theta'(t)\bigr)\mathbbm{e}(t).
\end{equation*}
The~envelope condition for the~line $\gamma_-(\theta(t))+\lambda\Delta_\gamma(t)$ gives \eqref{eq:degenerate-CSS}, exactly as in the~proof of Proposition~\ref{prop:CSS-envelope}.  The~terms parallel to $\mathbbm{e}$ cancel after differentiating, and hence $S'=\Lambda'\Delta_\gamma$.  Homogenising \eqref{eq:degenerate-CSS} gives \eqref{eq:degenerate-projective-CSS}.
\end{proof}

We first treat a~$5/2$-cusp, and later a~$5/3$-cusp.

\begin{theorem}\label{thm:five-two-resonances}
Assume that $\gamma_+$ is a~$5/2$-cusp at $0$.  Then:
\begin{enumerate}[(i)]
\item If $\ell'(0)-\rho_{-,0}\theta''(0)\ne0$, the~Wigner caustic is a $5/2$-cusp.  If $\ell'(0)-\rho_{-,0}\theta''(0)=0$, it is a $5/3$-cusp.
\item Suppose $\rho_{-,0}\ne0$ and $\ell'(0)+\rho_{-,0}\theta''(0)\ne0$.  Then the~centre symmetry set extends smoothly across $0$ and is regular there.  More precisely,
      \begin{equation}\label{eq:Lambda-five-two}
      \Lambda(0)=
      \frac{\rho_{-,0}\theta''(0)}{\ell'(0)+\rho_{-,0}\theta''(0)},
      \qquad
      \Lambda'(0)=
      \frac{\rho_{-,0}
      \bigl(\ell'(0)\theta'''(0)-\ell''(0)\theta''(0)\bigr)}
      {2\bigl(\ell'(0)+\rho_{-,0}\theta''(0)\bigr)^2}\ne0.
      \end{equation}
\item Suppose $\rho_{-,0}\ne0$ and $\ell'(0)+\rho_{-,0}\theta''(0)=0$.  Then the projective centre symmetry set tends to $[\Delta_\gamma(0):0]$.  Thus the~affine branch goes to infinity in the~chord direction.
\end{enumerate}
If $\theta''(0)\ne0$, we call the~value of $\rho_{-,0}$ producing the~exceptional Wigner case in~(i) the~\emph{Wigner-resonant signed radius}, and the~value producing the~projective degeneration in~(iii) the~\emph{$\CSS$-resonant signed radius}.  These two radii are opposite:
\begin{equation}\label{eq:opposite-resonances}
 \rho_{-,0}^{\,\E_{1/2}}=\frac{\ell'(0)}{\theta''(0)},
 \qquad
 \rho_{-,0}^{\,\CSS}=-\frac{\ell'(0)}{\theta''(0)}.
\end{equation}
\end{theorem}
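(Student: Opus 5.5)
The plan is to work throughout in the original parameter $t$ of $\gamma_+$ and to use Proposition~\ref{prop:degenerate-master} as the master formula. Each of the three constructions will then be read off from the Taylor expansions at $t=0$ of $\ell$, $\theta'$, and the composite $r(t):=\rho_-(\theta(t))$.

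Since $\gamma_+$ is a $5/2$-cusp, Lemma~\ref{lem:cusp-criteria}(iii) gives $\ell(0)=\theta'(0)=0$, $\ell'(0)\ne0$, and $\ell'(0)\theta'''(0)-\ell''(0)\theta''(0)\ne0$. Because $\theta'(0)=0$, we get $r'(0)=\rho_-'(\theta(0))\theta'(0)=0$. Hence $r(t)=\rho_{-,0}+O(t^2)$, and only $\rho_{-,0}$ survives at the orders we need.

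For (i), I will use that $\E_{1/2}$ is a frontal with the same tangent field $\mathbbm{e}$, hence the same angle function $\theta$. Its Legendre data are therefore $(\ell_{\E_{1/2}},\theta)$, with $\ell_{\E_{1/2}}=\tfrac12(\ell-r\theta')$. We have $\ell_{\E_{1/2}}(0)=0$, and, using $r'(0)=0$,
\begin{equation*}
 \ell_{\E_{1/2}}'(0)=\tfrac12\bigl(\ell'(0)-\rho_{-,0}\theta''(0)\bigr).
\end{equation*}
\begin{itemize}
\item If $\ell_{\E_{1/2}}'(0)\ne0$, I check the last condition of \eqref{eq:five-two-criterion}. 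We have
\begin{equation*}
 \ell_{\E_{1/2}}''(0)=\tfrac12\bigl(\ell''(0)-\rho_{-,0}\theta'''(0)-2r'(0)\theta''(0)\bigr),
\end{equation*}
and the last term vanishes. The expression $\ell_{\E_{1/2}}'\theta'''-\ell_{\E_{1/2}}''\theta''$ at $0$ then equals $\tfrac12(\ell'\theta'''-\ell''\theta'')(0)$, because the $\rho_{-,0}\theta''\theta'''$ terms cancel. This is nonzero, so Lemma~\ref{lem:cusp-criteria}(iii) gives a $5/2$-cusp.
\item If $\ell_{\E_{1/2}}'(0)=0$, then $\theta''(0)\ne0$ is forced, since $\ell'(0)\ne0$, and we have $\ell'(0)=\rho_{-,0}\theta''(0)$. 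For Lemma~\ref{lem:cusp-criteria}(iv) I need $\ell_{\E_{1/2}}''(0)\theta''(0)\ne0$. Using the resonance relation,
\begin{equation*}
 2\ell_{\E_{1/2}}''(0)\theta''(0)=\ell''\theta''-\rho_{-,0}\theta'''\theta''=\ell''\theta''-\ell'\theta'''\ne0.
\end{equation*}
This gives a $5/3$-cusp.
\end{itemize}
I also need to confirm the non-flatness hypothesis of the lemma. It holds because the jets above are nonzero.

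For (ii) and (iii), I will write the numerator and denominator of $\Lambda$ in \eqref{eq:degenerate-CSS}:
\begin{equation*}
 N=r\theta'=\rho_{-,0}\theta''(0)t+\tfrac12\rho_{-,0}\theta'''(0)t^2+O(t^3),
\end{equation*}
\begin{equation*}
 D=\ell+r\theta'=\bigl(\ell'(0)+\rho_{-,0}\theta''(0)\bigr)t+\tfrac12\bigl(\ell''(0)+\rho_{-,0}\theta'''(0)\bigr)t^2+O(t^3).
\end{equation*}
Both vanish at $0$, and the common factor $t$ is removed by Hadamard's lemma. When the linear coefficient $d_1$ of $D$ is nonzero, $\Lambda=(N/t)/(D/t)$ is smooth. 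Its value at $0$ is the ratio of the linear coefficients, and the quotient rule gives
\begin{equation*}
 \Lambda'(0)=\frac{n_2d_1-n_1d_2}{d_1^2}.
\end{equation*}
Expanding, $n_2d_1-n_1d_2$ equals $\tfrac12\rho_{-,0}(\theta'''\ell'-\theta''\ell'')(0)$, which yields \eqref{eq:Lambda-five-two}. This is nonzero because $\rho_{-,0}\ne0$. Since $S'=\Lambda'\Delta_\gamma$ and $\Delta_\gamma(0)\ne0$, $S$ is regular at $0$.

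For (iii), I use the projective form \eqref{eq:degenerate-projective-CSS} and divide by $t$. The last coordinate becomes $D/t\to d_1=0$. The first becomes $(N\gamma_++\ell\gamma_-)/t\to\rho_{-,0}\theta''(0)\gamma_+(0)+\ell'(0)\gamma_-(0)$. Since $d_1=0$ gives $\ell'(0)=-\rho_{-,0}\theta''(0)\ne0$, this limit is $\rho_{-,0}\theta''(0)\Delta_\gamma(0)$, a nonzero multiple of $\Delta_\gamma(0)$. This gives $[\Delta_\gamma(0):0]$.

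The resonant radii \eqref{eq:opposite-resonances} follow by solving the two linear conditions for $\rho_{-,0}$ when $\theta''(0)\ne0$. The only delicate point is keeping track of $r'(0)=0$ and the cancellations in the second-order coefficients; everything else is bookkeeping.
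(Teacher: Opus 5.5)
Your proposal is correct and follows essentially the same route as the paper. For the Wigner part, you apply Lemma~\ref{lem:cusp-criteria}(iii)/(iv) to the Legendre data $(\ell_{\E_{1/2}},\theta)$, where $\theta'(0)=0$ (equivalently your $r'(0)=0$) produces the cancellation. For the centre symmetry set, you remove the common simple zero from the numerator and denominator of $\Lambda$, or from the homogeneous coordinates of $\widetilde S$ in the resonant case, as the paper does with $\ell=t\bar\ell$ and $\eta=t\bar\eta$; you simply write out the Taylor coefficients that the paper leaves implicit.
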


\begin{proof}
For the~signed speed $\ell_{\E_{1/2}}$ in \eqref{eq:degenerate-Wigner-speed}, the~vanishing of $\theta'(0)$ gives
\begin{equation}\label{eq:ellW-derivatives}
 \ell_{\E_{1/2}}'(0)=\frac{\ell'(0)-\rho_{-,0}\theta''(0)}{2},
 \qquad
 \ell_{\E_{1/2}}''(0)=\frac{\ell''(0)-\rho_{-,0}\theta'''(0)}2.
\end{equation}
Therefore
\begin{equation}\label{eq:invariant-cancellation}
 \ell_{\E_{1/2}}'(0)\theta'''(0)
 -\ell_{\E_{1/2}}''(0)\theta''(0)
 =\frac{\ell'(0)\theta'''(0)-\ell''(0)\theta''(0)}{2}\ne0.
\end{equation}
If $\ell'(0)-\rho_{-,0}\theta''(0)\ne0$, Lemma~\ref{lem:cusp-criteria}(iii) gives a~$5/2$-cusp.  If equality holds, then $\theta''(0)\ne0$ because $\ell'(0)\ne0$, and \eqref{eq:invariant-cancellation} becomes
\begin{equation*}
 \ell_{\E_{1/2}}''(0)\theta''(0)
 =-\frac{\ell'(0)\theta'''(0)-\ell''(0)\theta''(0)}{2}\ne0.
\end{equation*}
Lemma~\ref{lem:cusp-criteria}(iv) gives a~$5/3$-cusp.

For the~$\CSS$ part, set $\eta(t)=\rho_-(\theta(t))\theta'(t)$.  Since $\theta'(0)=0$,
\begin{equation*}
 \eta(0)=0,\qquad
 \eta'(0)=\rho_{-,0}\theta''(0),\qquad
 \eta''(0)=\rho_{-,0}\theta'''(0).
\end{equation*}
Write $\ell(t)=t\bar\ell(t)$ and $\eta(t)=t\bar\eta(t)$.  If $\ell'(0)+\rho_{-,0}\theta''(0)\ne0$, then
\begin{equation*}
 \Lambda(t)=\frac{\bar\eta(t)}{\bar\ell(t)+\bar\eta(t)}
\end{equation*}
is smooth.  Its value and derivative at $0$ are exactly \eqref{eq:Lambda-five-two}.  Since $\rho_{-,0}\bigl(\ell'(0)\theta'''(0)-\ell''(0)\theta''(0)\bigr)\ne0$ and $\Delta_\gamma(0)\ne0$, $S'(0)=\Lambda'(0)\Delta_\gamma(0)\ne0$.

If $\ell'(0)+\rho_{-,0}\theta''(0)=0$, divide the~homogeneous coordinates in \eqref{eq:degenerate-projective-CSS} by $t$.  Their value at $0$ is
\begin{equation*}
 [\rho_{-,0}\theta''(0)\gamma_+(0)+\ell'(0)\gamma_-(\theta(0)):
   \ell'(0)+\rho_{-,0}\theta''(0)]
 =[\ell'(0)(\gamma_-(\theta(0))-\gamma_+(0)):0],
\end{equation*}
which is the~point at infinity in the~direction of $\Delta_\gamma(0)$.  Formula \eqref{eq:opposite-resonances} is now immediate.
\end{proof}

\begin{remark}\label{rem:bias-resonance}
The~bias of a~$5/2$-cusp in the~data \eqref{eq:Legendre-data} is
\begin{equation*}
 b_{\gamma_+}=\frac{3\theta''(0)}{|\ell'(0)|}
\end{equation*}
(see \cite{Teramoto}).  Hence, when $b_{\gamma_+}\ne0$, both resonant radii in \eqref{eq:opposite-resonances} have absolute value $3/|b_{\gamma_+}|$. If the~bias vanishes, neither finite resonance is possible: the~Wigner caustic is a~$5/2$-cusp and the~$\CSS$ branch is regular for every $\rho_{-,0}\ne0$.
\end{remark}

\begin{theorem}\label{thm:five-three-transfer}
Assume that $\gamma_+$ is a~$5/3$-cusp at $0$.
\begin{enumerate}[(i)]
\item If $\rho_{-,0}\ne0$, the~Wigner caustic is a~$5/2$-cusp.  The~centre symmetry set extends smoothly, passes through $\gamma_+(0)$, and is regular. More precisely,
      \begin{equation}\label{eq:Lambda-five-three}
      \Lambda(0)=1,
      \qquad
      \Lambda'(0)=-
      \frac{\ell''(0)}{2\rho_{-,0}\theta''(0)}\ne0.
      \end{equation}
\item If $\rho_{-,0}=0$, the~Wigner caustic is a~$5/3$-cusp.
\end{enumerate}
\end{theorem}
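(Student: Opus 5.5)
The argument follows the proof of Theorem~\ref{thm:five-two-resonances}, using Proposition~\ref{prop:degenerate-master} and the criteria in Lemma~\ref{lem:cusp-criteria}. For a~$5/3$-cusp we have $\ell(0)=\ell'(0)=\theta'(0)=0$ and $\ell''(0)\theta''(0)\ne0$. By \eqref{eq:degenerate-Wigner-speed}, the~Wigner caustic has Legendre data $(\ell_{\E_{1/2}},\theta)$ with the~same angle function $\theta$. Hence $\theta'(0)=0$, and Lemma~\ref{lem:cusp-criteria}(iii)--(iv) applies once we compute the~jets of $\ell_{\E_{1/2}}$. Put $\eta(t)=\rho_-(\theta(t))\theta'(t)$. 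As in the~previous proof,
\[
\eta(0)=0,\qquad \eta'(0)=\rho_{-,0}\theta''(0),\qquad \eta''(0)=\rho_{-,0}\theta'''(0)+\rho_-'(\theta(0))\theta''(0)^2 .
\]
The~second-derivative term $\rho_-'\theta'^2$ drops out only at first order, so $\eta''(0)$ contains this extra summand. Therefore $\ell_{\E_{1/2}}(0)=0$, $\ell_{\E_{1/2}}'(0)=-\tfrac12\rho_{-,0}\theta''(0)$, and $\ell_{\E_{1/2}}''(0)=\tfrac12(\ell''(0)-\eta''(0))$.

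For (i), take $\rho_{-,0}\ne0$. Then $\ell_{\E_{1/2}}'(0)\ne0$, so the~first two conditions of \eqref{eq:five-two-criterion} hold. The~third condition requires
\[
\ell_{\E_{1/2}}'(0)\theta'''(0)-\ell_{\E_{1/2}}''(0)\theta''(0)\ne0 .
\]
Expanding, the~$\rho_{-,0}\theta''\theta'''$ terms cancel exactly as in \eqref{eq:invariant-cancellation}, leaving
\[
-\tfrac12\,\theta''(0)\bigl(\ell''(0)-\rho_-'(\theta(0))\theta''(0)^2\bigr).
\]
This is the~step I expect to be the~main obstacle. As written, the~expression need not be nonzero: it vanishes when $\ell''(0)=\rho_-'(\theta(0))\theta''(0)^2$. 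I would first recheck the~chain-rule term in $\eta''$. If it survives, the~statement needs a~genericity condition, or it should be read with $\rho_-$ constant to second order. Otherwise nondegeneracy comes from $\ell''(0)\theta''(0)\ne0$, and Lemma~\ref{lem:cusp-criteria}(iii) gives the~$5/2$-cusp.

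For the~$\CSS$ part of (i), note that $\ell$ vanishes to order $2$ and $\eta$ to order exactly $1$. Write $\ell=t^2\hat\ell$ and $\eta=t\bar\eta$ with $\bar\eta(0)=\rho_{-,0}\theta''(0)\ne0$. Then
\[
\Lambda=\frac{\bar\eta}{t\hat\ell+\bar\eta}
\]
is smooth, with $\Lambda(0)=1$. Differentiating at $0$ gives
\[
\Lambda'(0)=-\frac{\hat\ell(0)}{\bar\eta(0)}=-\frac{\ell''(0)}{2\rho_{-,0}\theta''(0)}\ne0 .
\]
From \eqref{eq:degenerate-CSS} we get $S(0)=\gamma_+(0)$. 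Since $S'=\Lambda'\Delta_\gamma$ by Proposition~\ref{prop:degenerate-master}, and $\Delta_\gamma(0)\ne0$, we have $S'(0)\ne0$, so $S$ is regular there.

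For (ii), take $\rho_{-,0}=0$. Then $\ell_{\E_{1/2}}(0)=\ell_{\E_{1/2}}'(0)=0$ and $\ell_{\E_{1/2}}''(0)=\tfrac12\bigl(\ell''(0)-\rho_-'(\theta(0))\theta''(0)^2\bigr)$. If this is nonzero, Lemma~\ref{lem:cusp-criteria}(iv) gives a~$5/3$-cusp. The~same extra term is the~point to scrutinise, and again I would verify whether it is truly present.
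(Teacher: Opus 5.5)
Your overall route is the paper's: compute the jets of $\ell_{\E_{1/2}}$ and apply Lemma~\ref{lem:cusp-criteria}(iii)--(iv), then write $\Lambda=\bar\eta/(\bar\eta+t\hat\ell)$ for the $\CSS$. The $\CSS$ half of (i) is correct as written, including $\Lambda(0)=1$, $\Lambda'(0)$, $S(0)=\gamma_+(0)$ and regularity, and it does not use $\eta''(0)$. The Wigner half is not proved, however. It rests on a wrong value of $\eta''(0)$. Because of that, you leave both the $5/2$ nondegeneracy in (i) and the $5/3$ condition in (ii) unresolved, and you suggest the statement may need an extra genericity hypothesis. It does not.

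The error is an off-by-one in the chain rule. Differentiating $\eta=\rho_-(\theta)\theta'$ twice gives
\[
\eta''=\rho_-''(\theta)\theta'^3+3\rho_-'(\theta)\theta'\theta''+\rho_-(\theta)\theta''' .
\]
Since $\theta'(0)=0$, both chain-rule terms vanish at $0$, so $\eta''(0)=\rho_{-,0}\theta'''(0)$, exactly as in the proof of Theorem~\ref{thm:five-two-resonances}. Equivalently, $\theta(t)-\theta(0)=O(t^2)$, hence $\rho_-(\theta(t))=\rho_{-,0}+O(t^2)$ and $\eta=\rho_{-,0}\theta'+O(t^3)$. The summand $\rho_-'(\theta)\theta'^2$ of $\eta'$ vanishes to second order, not first. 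So a $\rho_-'(\theta(0))\theta''(0)^2$ term first appears in $\eta'''(0)$, which the criteria never use.

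With the correct value, $\ell_{\E_{1/2}}''(0)=\tfrac12\bigl(\ell''(0)-\rho_{-,0}\theta'''(0)\bigr)$. The $\rho_{-,0}\theta''\theta'''$ terms cancel, and the quantity in \eqref{eq:five-two-criterion} for $\E_{1/2}$ equals $-\tfrac12\ell''(0)\theta''(0)\ne0$. This is precisely the $5/3$-cusp hypothesis on $\gamma_+$. In (ii), likewise, $\ell_{\E_{1/2}}''(0)\theta''(0)=\tfrac12\ell''(0)\theta''(0)\ne0$. After this correction your argument coincides with the paper's.
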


\begin{proof}
Let $\ell_{\E_{1/2}}$ be given by \eqref{eq:degenerate-Wigner-speed}.  Since $\ell(0)=\ell'(0)=\theta'(0)=0$, one has
\begin{equation*}
 \ell_{\E_{1/2}}'(0)=-\frac{\rho_{-,0}\theta''(0)}2,
 \qquad
 \ell_{\E_{1/2}}''(0)=\frac{\ell''(0)-\rho_{-,0}\theta'''(0)}2.
\end{equation*}
If $\rho_{-,0}\ne0$, then $\ell_{\E_{1/2}}'(0)\ne0$ and
\begin{equation*}
 \ell_{\E_{1/2}}'(0)\theta'''(0)
 -\ell_{\E_{1/2}}''(0)\theta''(0)
 =-\frac{\ell''(0)\theta''(0)}2\ne0.
\end{equation*}
Lemma~\ref{lem:cusp-criteria}(iii) gives a~$5/2$-cusp.  If $\rho_{-,0}=0$, then
\begin{equation*}
 \ell_{\E_{1/2}}'(0)=0,
 \qquad
 \ell_{\E_{1/2}}''(0)\theta''(0)
 =\frac{\ell''(0)\theta''(0)}2\ne0,
\end{equation*}
so Lemma~\ref{lem:cusp-criteria}(iv) gives a~$5/3$-cusp.

Assume now that $\rho_{-,0}\ne0$.  In \eqref{eq:degenerate-CSS}, the~function $\eta=\rho_-(\theta)\theta'$ has a~simple zero, while $\ell$ has a~double zero.  Thus $\Lambda=\eta/(\eta+\ell)$ extends with $\Lambda(0)=1$. Expanding
\begin{equation*}
 \eta(t)=\rho_{-,0}\theta''(0)t+O(t^2),
 \qquad
 \ell(t)=\frac{\ell''(0)}2t^2+O(t^3),
\end{equation*}
gives \eqref{eq:Lambda-five-three}.  Since $S'=\Lambda'\Delta_\gamma$, the~$\CSS$ branch is regular and $S(0)=\gamma_+(0)$.
\end{proof}

\section*{Statements and Declarations}

\noindent\textbf{Funding.} The~author received no financial support for the~research, authorship, or publication of this article.

\noindent\textbf{Competing interests.}
The~author declares that he has no competing interests.

\noindent\textbf{Data availability.} No datasets were generated or analysed in this theoretical study.

\noindent\textbf{Use of generative artificial intelligence.} During the~preparation of this manuscript, the~author used OpenAI's ChatGPT (GPT-5.6) as an auxiliary tool to assist with language editing and checking mathematical derivations and computations. All AI-assisted content was independently reviewed and verified by the~author, who takes full responsibility for the~manuscript.

\end{document}